\documentclass[12pt]{article}
\usepackage[english]{babel}
\usepackage{amssymb, amsfonts, amsthm, amsmath}
\usepackage{graphicx}

\newtheorem{theorem}{\bf Theorem}
\newtheorem{remark}{\bf Remark}

\begin{document}

\title{\textbf{\large{On Fixed Points of Nonlinear Monotone and Strongly Concave Operators Acting in Normal Cones}}
\author{Khachatur A. Khachatryan}}
\date{}

\maketitle

\textbf{Abstract:} We introduce and study a new class of nonlinear monotone operators
acting in normal cones of real Banach spaces and possessing the property
of strong concavity. We establish
new constructive principles for the existence of nonzero fixed points for this class of operators.
Further, we prove that the corresponding iterative process converges to the fixed point
at geometric rate. We also establish the uniqueness of the fixed point in a sufficiently wide conical segment. These results are applied to Hammerstein-type and Urysohn-type nonlinear integral operators acting in non-reflexive
Banach spaces, as well as to the Cauchy problem for a nonlinear heat equation.

\textbf{MSC:} 47H10

\textbf{Keywords:} fixed point, normal cone, monotonicity,
strong concavity, iterations.

\maketitle

\section{Introduction}\label{sec1}

\subsection{Basic notions and problem history}\label{subsec1.1}

First, we present the necessary definitions and known facts
from the theory of nonlinear monotone operators acting in cones
of real Banach spaces (see \cite{kras1}-\cite{kras4}).

Let $E$ be a real Banach space
partially ordered by a cone $K.$
The cone $K$ is called \emph{regular}
if every monotonically nondecreasing sequence that is bounded from above
converges in $E.$

The cone $K$ is called \emph{minihedral}
if any pair of elements of $E$ has a supremum.
We shall call the cone $K$ \emph{strongly minihedral}
if, for every set $M \subset E$ bounded from above,
the least upper bound $\sup M$ exists.

The cone $K$ is called \emph{normal} if there exists a number $\delta>0$ such that
\[
\left|\left| e_1 + e_2\right|\right| \ge \delta
\quad \text{for all } e_1,e_2 \in K
\text{ with } \left|\left|e_1\right|\right|=\left|\left|e_2\right|\right|=1.
\]
It is easy to verify that every regular cone is normal.
Generally speaking, the converse statement is not true.
The cone of nonnegative functions in the space of continuous functions
$C[a,b]$ is an example of a normal but not regular cone.

It is well-known that cones of nonnegative functions in the spaces
$C$ and $L_p$ are normal.
However, the cone of nonnegative functions in the space
$C^1[a,b]$ of continuously differentiable functions on the interval $[a,b]$
$x(t),$ endowed with the norm
\[
\|x\| = \max_{t\in[a,b]} |x(t)| + \max_{t\in[a,b]} |x'(t)|,
\]
does not possess the property of normality.

We say that a norm in the space $E$ with cone $K$ is
\emph{semi-monotone} if there exists a number $N>0$ such that,
for any elements $x,y \in K$ from $x \le y,$ we have
$
\|x\| \le N \|y\|.
$

We say that the norm is \emph{monotone} if from the inequalities
$\theta \le x \le y$ ($\theta$ is the zero element of the space $E$)
it follows that
$
\|x\| \le \|y\|.
$

It is easy to verify that in spaces $C$ and $L_p$
the norm is monotone with respect to the semi-ordering
induced by the cone of nonnegative functions.

It is well-known that semi-monotonicity of the norm is equivalent to
the normality of the cone  (see ~\cite{kras1}).
Let $u_0, v_0 \in E$ and $u_0 \le v_0.$
The set $\langle u_0, v_0 \rangle$ of elements $x \in E$ satisfying inequalities
$u_0 \le x \le v_0$ is called a \emph{conical segment}.
We say that an operator $A \colon E \to E$ is \emph{monotone}
if from $x \le y$ it follows that $Ax \le Ay.$

We say that the operator $A$ maps the conical segment
$\langle u_0, v_0 \rangle$ \emph{into itself}
(or \emph{leaves it invariant})
if
$
Au_0 \ge u_0$
and
$Av_0 \le v_0.
$

We say that the operator $A$ is \emph{critical}
if $A\theta = \theta$ (see~\cite{khach2}).

The following results have been established (see~\cite{kras1}, \cite{bakh3}) for operators that preserve the invariance of the conical segment $\langle u_0, v_0 \rangle.$

\medskip
\textbf{Theorem A} {\rm (see \cite{kras1}).}
\emph{Let $A$ be a monotone and continuous operator on the conical segment
$\langle u_0, v_0 \rangle.$
If the operator $A$ maps
the conical segment $\langle u_0, v_0 \rangle$ into itself,
and the cone $K$ is regular, then
the operator $A$ has at least one
fixed point $x^{*}$.}

\medskip

\textbf{Theorem B} {\rm (Birkhoff--Tarski, \cite{kras1}).}
\emph{Let the operator $A$ be monotone on the conical segment
$\langle u_0, v_0 \rangle$ and let it
map $\langle u_0, v_0 \rangle$ into itself.
If the cone $K$
be strongly minihedral, then there exists
 at least one fixed point $x^{*}\in \langle u_0, v_0 \rangle$ of the operator $A.$}
\medskip

\textbf{Theorem C}
{\rm (a consequence of Schauder's theorem, \cite{kras1}).}
\emph{Let a monotone, completely continuous operator $A$
map the conical segment
$\langle u_0, v_0 \rangle$ into itself. If the cone $K$ is normal, then
the operator $A$ has at least one
fixed point.}

\textbf{Theorem D}
{\rm (I.\,A.~Bakhtin — M.\,A.~Krasnosel'skii, \cite{kras1}, \cite{bakh3}).}
\emph{Let the operator $A$ be monotone and continuous on the conical segment
$\langle u_0, v_0 \rangle,$ and let it
satisfy the condition
\begin{equation}
A(x+y) \ge Ax + \alpha(\|y\|)\, z_0,
\qquad
(x,\, x+y \in \langle u_0, v_0 \rangle,\ y \in K),
\tag{$\ast$}
\end{equation}
where $\alpha(t)$ is a nondecreasing positive continuous function
 for $t>0,$
and $z_0$ is a fixed nonzero element of the cone $K.$
Suppose that the operator $A$ maps
$\langle u_0, v_0 \rangle$ into itself.
Then
there exists at least one
fixed point of the operator $A$ on the conical segment $\langle u_0, v_0 \rangle.$}

\medskip

Further, the notions of
\emph{concavity} and $u_0$--\emph{concavity}
of an operator $A$ on the cone $K$ were introduced in the works~\cite{kras1}, \cite{bakh3} as follows.

We say that a monotone operator $A$ is
\emph{concave on $K$} if it is positive and if
for every nonzero element $x \in K$
there exist positive numbers $\alpha$ and $\beta$
such that
\[
\alpha u_0 \le Ax \le \beta u_0, \ \text{where}\ \   u_0 \text{ is a fixed nonzero element of the cone } K,
\]
and for any $x \in K,$ satisfying $x \ge \gamma u_0$
$(\gamma > 0),$ the relation
\begin{equation}\label{Khachatryan1}
A(tx) \ge t\,Ax, \qquad A(tx) \ne t\,Ax,
\quad (0 < t < 1)
\end{equation}
holds.

We say that a monotone concave operator $A$ is
\emph{$u_0$-concave} if for any $x \in K$
$(x \ge \gamma u_0,\ \gamma > 0)$
the following condition, which is stronger than \eqref{Khachatryan1}, holds:
for every segment $[a,b] \subset (0,1),$
there exists a number $\eta = \eta(x,a,b) > 0$
such that
\begin{equation}\label{Khachatryan2}
A(tx) \ge (1+\eta)\, t\, Ax .
\end{equation}

The following results have been established for concave and $u_0$-concave operators (see \cite{kras1}, \cite{bakh3}).

\medskip

\textbf{Theorem E}
{\rm (I.\,A.~Bakhtin — M.\,A.~Krasnosel'skii, \cite{kras1}, \cite{bakh3}).}
\emph{Suppose that, for a concave operator $A,$ the equation $x = Ax$
has
a unique nonzero solution $x^{*}$ in the cone $K.$
Further, suppose that one of the following three conditions holds:
\begin{enumerate}
\item[(a)] the cone $K$ is regular;
\item[(b)] the operator $A$ is completely continuous;
\item[(c)] condition {\rm($\ast$)} from Theorem~D holds.
\end{enumerate}
Then for any nonzero initial approximation
$x_0 \in K,$ successive approximations
$
x_{n+1} = A x_n, \; n = 0,1,\ldots,
$
converge in norm to $x^{*}.$
}

\medskip

\textbf{Theorem F}
{\rm (I.\,A.~Bakhtin — M.\,A.~Krasnosel'skii, \cite{kras1}, \cite{bakh3}).}
\emph{Suppose that,  for a $u_0$-concave operator $A,$ equation $x = Ax$
has a nonzero solution $x^{*}$ in the cone $K.$
Then, for any nonzero initial approximation $x_0 \in K,$ successive approximations
$
x_{n+1} = A x_n, \; n = 0,1,\ldots,
$
converge to $x^{*}$ in the $u_0$-norm
(we refer to \cite{kras1} for the concept of $u_0$-norms).}

\medskip

In ~\cite{kras4} the above-mentioned results were applied to Hammerstein-type nonlinear integral
operators.

\subsection{Summary of obtained results.}\label{subsec1.2}

Nonlinear monotone operators possessing the property of criticality
arise in various areas of mathematical physics and mathematical
biology (see~\cite{bre5}-\cite{diek8} and the references therein).
In most cases, for such operators, it is necessary to construct
a second (nontrivial) fixed point in the cone $K.$ In various applied problems it is also of interest
to establish the uniqueness of this second fixed point
in a certain conical segment.

We introduce the notion of \emph{strong concavity}
of monotone operators as follows.
We say that a monotone operator $A$ is \emph{strongly concave}
in the subset $K_1 \subseteq K$ of the cone $K$
if there exists a continuous and monotonically increasing
concave mapping $\varphi \colon [0,1] \to [0,1]$ satisfying properties
$
\varphi(0)=0, $ $ \varphi(1)=1, \ \varphi'( +0 ) = +\infty,
$
such that
\begin{equation}\label{Khachatryan3}
A(\sigma u) \ge \varphi(\sigma)\, Au,
\qquad u \in K_1 \subseteq K,\ \sigma \in [0,1].
\end{equation}

We investigate the existence and uniqueness
of nontrivial fixed points for strongly concave operators.
Under certain natural conditions on strongly concave
operators, we prove constructive existence theorems
for nonzero fixed points in normal cones.
Moreover, we establish that
 the corresponding iterative process converges in the norm
with a rate determined by a decreasing geometric progression.
The geometric progression itself is determined by
the concave mapping~$\varphi.$

Further, we prove the uniqueness of the constructed
fixed point in a sufficiently wide conical segment.

We emphasize that the novelty of our assumptions and results are:
\begin{enumerate}
\item[I)] we require strong concavity of the operator under consideration;
\item[II)] in some cases, we do not assume that the operator is continuous;
\item[III)] we find the solution of the nonlinear operator
equation $Ax = x$ in normal cones, in a constructive way;
\item[IV)] we obtain the rate of convergence of the corresponding
iterative process;
\item[V)] we obtain a uniqueness theorem for the constructed
fixed point in a sufficiently wide conical segment,
without requiring the normality of the cone
or the continuity of the corresponding operator.
\end{enumerate}

We conclude our paper as follows.
Section~\ref{sec2} is devoted to existence theorems for fixed points for strongly concave monotone operators.
In section~\ref{sec3}  we provide some consequences and generalizations of those theorems.
Section~\ref{sec4}  we discuss uniqueness of fixed points or its absence.
Finally, the last section~\ref{sec5}  the obtained results we apply to Hammerstein-type  and Urysohn-type nonlinear integral equations, as well as to the Cauchy problem for the nonlinear heat equation.

\section{Existence theorems for fixed points}\label{sec2}

In this section, we state three existence theorems which together constitute the main result of our work in terms of existence.

\medskip

\begin{theorem}\label{thm1}
Let $A \colon K \to K$ be a monotone operator,
and let the cone $K$ be normal.
Assume that there exist numbers
$\sigma_0 \in (0,1),$ $n_0 \in \mathbb{N},$
and an element $v_0 \in K,$ $v_0 \neq \theta$
such that
\begin{equation}\label{Khachatryan4}
\sigma_0 A^{\,n_0-1} v_0
\;\le\;
A^{\,n_0} v_0
\;\le\;
A^{\,n_0-1} v_0 .
\end{equation}

If the operator $A$ is strongly concave
on $K_1 := \langle \theta, A^{\,n_0-1} v_0 \rangle \subset K,$
then the following assertions hold:
\begin{enumerate}
\item[1)]
there exists an element $x^{*} \in K$, $x^{*} \neq \theta$
such that
$
Ax^{*} = x^{*};
$
\item[2)]
there exist numbers $C>0$ and $k \in (0,1)$ such that
\[
\|A^{n} v_0 - x^{*}\| \le C k^{n},
\qquad n = n_0, n_0+1, \ldots .
\]
\end{enumerate}
\end{theorem}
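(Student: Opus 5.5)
Set $w_n := A^{n} v_0$ for $n \ge n_0-1$. By \eqref{Khachatryan4} and monotonicity of $A$, induction gives $w_{n+1}\le w_n$ for all $n\ge n_0-1$, so $(w_n)$ is nonincreasing and lies in $K_1=\langle\theta,w_{n_0-1}\rangle$. We also get lower bounds. Write $w_{n_0}\ge\sigma_0 w_{n_0-1}$. Applying $A$ and the strong concavity \eqref{Khachatryan3} with $u=w_{n_0-1}\in K_1$ yields
\[
w_{n_0+1}=Aw_{n_0}\ge A(\sigma_0 w_{n_0-1})\ge\varphi(\sigma_0)\,w_{n_0}.
\]
Inductively, with $\sigma_{m+1}:=\varphi(\sigma_m)$, we have $w_{n_0+m}\ge\sigma_m w_{n_0+m-1}$. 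Here we use that each $w_j\in K_1$, so concavity applies. Since $\varphi$ is concave, increasing, $\varphi(0)=0$, $\varphi(1)=1$ and $\varphi'(+0)=+\infty$, we have $\varphi(\sigma)\ge\sigma$ on $[0,1]$, so $\sigma_m\uparrow$. If $\varphi$ has a fixed point $s<1$ with $s>0$, concavity forces $\varphi(\sigma)=\sigma$ on $[s,1]$. So $\sigma_m$ converges either to $1$ or to some fixed point $\ge\sigma_0$. This is the part where the geometric rate must come from; see the last paragraph.

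The key device is to pass from comparison of consecutive terms to comparison with a fixed term. Note $w_{n_0+m}\ge \sigma_m\sigma_{m-1}\cdots$, which is awkward. Better: $w_{n}\ge w_{n+p}\ge$ and I want $w_{n+p}\ge \tau_n w_n$ with $\tau_n\to1$ uniformly in $p$. Write $w_{n+p}\ge \lambda w_n$ and apply $A^j$. Since $\lambda w_n\in K_1$ and the orbit stays in $K_1$, strong concavity iterated gives $w_{n+p+j}\ge\varphi^{j}(\lambda)w_{n+j}$. To start, I need $\lambda>0$ with $w_{n}\ge\lambda w_{n_0-1}$ for all $n$. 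This is the main obstacle: products of the $\sigma_m$ may tend to $0$. To get it, I will instead compare $w_n$ with $w_{n_0-1}$ via $u=w_{n_0-1}$. From $w_{n_0}\ge\sigma_0 w_{n_0-1}$ we obtain $w_{n_0+1}\ge A(\sigma_0w_{n_0-1})\ge\varphi(\sigma_0)w_{n_0}\ge\varphi(\sigma_0)\sigma_0 w_{n_0-1}$. That is still a product, so I will use a cleverer form. Define $\mu_n:=\sup\{\mu: w_n\ge\mu w_{n_0-1}\}$. Then $w_{n+1}=Aw_n\ge A(\mu_n w_{n_0-1})\ge\varphi(\mu_n)w_{n_0}\ge\varphi(\mu_n)\sigma_0 w_{n_0-1}$, so $\mu_{n+1}\ge\sigma_0\varphi(\mu_n)$. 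Because $\varphi'(+0)=+\infty$, the map $\psi(\mu)=\sigma_0\varphi(\mu)$ satisfies $\psi(\mu)>\mu$ for small $\mu>0$. It is concave, so it has a positive fixed point $\mu^*$. Starting from $\mu_{n_0-1}=1\ge\mu^*$, induction gives $\mu_n\ge\mu^*>0$ for all $n$. This is exactly where $\varphi'(+0)=+\infty$ is used.

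Now $w_{n+p}\ge\mu^* w_{n_0-1}\ge\mu^* w_{n-n_0+1+(n_0-1)}$. More precisely, for $n\ge n_0-1$ set $j=n-(n_0-1)$. Applying $A^{j}$ to $w_{n_0-1+p}\ge\mu^*w_{n_0-1}$ with iterated concavity gives
\[
w_{n+p}\ge\varphi^{j}(\mu^*)\,w_n .
\]
Put $\varepsilon_j:=1-\varphi^{j}(\mu^*)$. Concavity of $\varphi$ with $\varphi(1)=1$ gives $1-\varphi(1-\varepsilon)\le(1-\varphi(\mu^*)+\mu^*)\cdots$. Concretely, by the chord inequality $\varphi(\sigma)\ge\sigma+(1-\sigma)\frac{\varphi(\mu^*)-\mu^*}{1-\mu^*}$ for $\sigma\in[\mu^*,1]$. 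Hence $\varepsilon_{j+1}\le k\,\varepsilon_j$ with $k:=1-\frac{\varphi(\mu^*)-\mu^*}{1-\mu^*}\in(0,1)$, assuming $\varphi(\mu^*)>\mu^*$. If $\varphi(\mu^*)=\mu^*$, I decrease $\mu^*$ to a point where $\varphi$ is strictly above the diagonal, which exists since $\varphi'(+0)=\infty$. It then follows that $\theta\le w_n-w_{n+p}\le\varepsilon_j w_n\le k^{j}w_{n_0-1}$. By normality, i.e. semi-monotonicity of the norm, $\|w_n-w_{n+p}\|\le Nk^{j}\|w_{n_0-1}\|$. So $(w_n)$ is Cauchy and converges to some $x^*$, and the limit $p\to\infty$ gives the rate in 2). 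Since cones are closed, $\mu^*w_{n_0-1}\le x^*\le w_n$ for all $n$, so $x^*\neq\theta$. For $Ax^*=x^*$ without continuity, use order: $x^*\le w_n$ gives $Ax^*\le w_{n+1}$, hence $Ax^*\le x^*$. Also $x^*\ge\varphi^{j}(\mu^*)w_n$, so concavity gives $Ax^*\ge\varphi(\varphi^{j}(\mu^*))w_{n+1}$. Letting $n\to\infty$ yields $Ax^*\ge x^*$, and so $Ax^*=x^*$.
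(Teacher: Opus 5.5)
Your proof is correct. It uses the same ingredients as the paper: the decreasing iterates $w_n=A^nv_0$ for $n\ge n_0-1$; the root of $\sigma_0\varphi(\tau)=\tau$ as an invariant lower bound (your $\mu^*$ is exactly the paper's $\tau^*$); the chord estimate $1-\varphi(\sigma)\le k(1-\sigma)$; normality and closedness of $K$; and an order argument for $Ax^*=x^*$ that avoids continuity. What differs is how the convergence step is organized.

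The paper gets convergence from the consecutive bounds $\varphi^{(n)}(\sigma_0)x_n\le x_{n+1}\le x_n$, where $\varphi^{(n)}$ is the $n$-fold iterate. These give $\theta\le x_n-x_{n+1}\le(1-\varphi^{(n)}(\sigma_0))x_0$, and the paper then telescopes. No products of the ratios appear, so the consecutive bounds in your first paragraph already suffice for convergence. There, $\tau^*$ is used only for $x^*\neq\theta$ and for the last step, which iterates $A$ on $x^*$ to get $\varphi^{(n)}(\tau^*)x_n\le A^nx^*\le Ax^*$.

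You instead make the invariant bound drive the convergence. Propagating $w_{n_0-1+p}\ge\mu^*w_{n_0-1}$ gives $w_{n+p}\ge\varphi^{j}(\mu^*)w_n$ uniformly in $p$. So the Cauchy estimate needs no telescoping, and the sandwich $\varphi^{j}(\mu^*)w_n\le x^*\le w_n$ makes $Ax^*\ge x^*$ a one-line consequence. You also prove the chord estimate inline, whereas the paper imports it from earlier work.

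The price is a weaker rate. For concave $\varphi$ the chord slopes $\frac{1-\varphi(\sigma)}{1-\sigma}$ are nonincreasing in $\sigma$, and $\tau^*<\sigma_0$. Hence your $k=\frac{1-\varphi(\tau^*)}{1-\tau^*}$ is never smaller than the paper's $\frac{1-\varphi(\sigma_0)}{1-\sigma_0}$.

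Minor points:
\begin{itemize}
\item The sup-construction of $\mu_n$ is unnecessary; you can induct directly on $w_n\ge\mu^*w_{n_0-1}$.
\item Your hedge about the case $\varphi(\mu^*)=\mu^*$ is vacuous, since $\varphi(\mu^*)=\mu^*/\sigma_0>\mu^*$.
\item $\varphi$ has no fixed point in $(0,1)$ at all. A fixed point $s$ would force $\varphi(\sigma)=\sigma$ on $[0,s]$, not just on $[s,1]$, contradicting $\varphi'(+0)=+\infty$.
\item Both you and the paper deduce $x^*\neq\theta$ from $x^*\ge\tau^*A^{n_0-1}v_0$, which tacitly requires $A^{n_0-1}v_0\neq\theta$. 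For $n_0\ge 2$ the hypotheses do not imply this: $A\equiv\theta$ with $n_0=2$ satisfies all of them. So this should be stated as an extra assumption.
\end{itemize}
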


\medskip

\begin{proof}
Consider the following iterations:
\begin{equation}\label{Khachatryan5}
x_{n+1}=Ax_n,\qquad x_0=A^{\,n_0-1}v_0,\qquad n=0,1,\ldots
\end{equation}

Using inequality $A^{n_0}v_0\le A^{n_0-1}v_0$ (see condition \eqref{Khachatryan4})
and the monotonicity of the operator $A,$ it is easy to prove by induction on $n$ that
\begin{equation}\label{Khachatryan6}
x_{n+1}\le x_n,\qquad n=0,1,\ldots
\end{equation}
Rewrite inequality \eqref{Khachatryan4} in terms of $x_0$ and $x_1$:
\begin{equation}\label{Khachatryan7}
\sigma_0 x_0 \le x_1 \le x_0.
\end{equation}
Taking into account \eqref{Khachatryan5}, as well as the monotonicity and strong concavity of the operator $A,$
from \eqref{Khachatryan7} we obtain
\begin{equation}\label{Khachatryan8}
\varphi(\sigma_0)\,x_1 \le x_2 \le x_1.
\end{equation}
Since $\varphi(\sigma_0)\in(0,1),$ using \eqref{Khachatryan5} again, as well as
the monotonicity and strong concavity of the operator $A,$ from \eqref{Khachatryan8} we obtain
\[
\varphi(\varphi(\sigma_0))\,x_2 \le A\bigl(\varphi(\sigma_0)x_1\bigr)
\le x_3 \le x_2.
\]
Continuing this process, at the $n$th step we obtain
\begin{equation}\label{Khachatryan9}
\underbrace{\varphi\left(\varphi\cdots \varphi\left(\sigma_0\right)\right)}_{n}\,x_n
\le x_{n+1} \le x_n.
\end{equation}
We now consider the following characteristic equation:
\begin{equation}\label{Khachatryan10}
\varphi(\tau)=\frac{\tau}{\sigma_0}.
\end{equation}
Let us verify that equation \eqref{Khachatryan10} has unique solution
$\tau^*\in(0,\sigma_0)$.
Indeed, consider the following function in interval $(0,1]:$
\[
\chi(\tau):=\frac{\varphi(\tau)}{\tau}-\frac{1}{\sigma_0}.
\]
Taking into account the properties of the mapping $\varphi$
(see Subsection~\ref{subsec1.2} for the definition of strong concavity of the operator),
we claim the following: $\chi \in C(0,1],$ $ \chi(\tau)$ is monotonically decreasing on $(0,1],$
$ \chi(1)=1-\frac{1}{\sigma_0}<0,$
$
\chi(+0):=\lim\limits_{\tau\to+0}\chi(\tau)
=\varphi'(+0)-\frac{1}{\sigma_0}=+\infty .
$
Consequently, there exists unique point
$\tau^{*}\in(0,1)$ such that
$
\chi(\tau^{*})=0.
$
Since the function $\dfrac{\varphi(\tau)}{\tau}$ is monotonically decreasing on $(0,1],$
we conclude that in fact $\tau^{*}\in(0,\sigma_0).$

Now, let us prove that the sequence of elements
$\{x_n\}_{n=0}^{\infty}$ satisfies the following
lower bound:
\begin{equation}\label{Khachatryan11}
x_n \ge \tau^{*} A^{\,n_0-1} v_0,
\qquad n=0,1,\ldots
\end{equation}

For $n=0,$ inequality~\eqref{Khachatryan11} follows immediately
from the definition of the initial approximation in iterations~\eqref{Khachatryan5}.
Assume that~\eqref{Khachatryan11} holds
for some natural number $n$.
Then, taking into account condition~\eqref{Khachatryan4}, as well as
the equality $\sigma_0\,\varphi(\tau^{*})=\tau^{*},$
the monotonicity and strong concavity of the operator $A,$
from~\eqref{Khachatryan5} we obtain
\[
x_{n+1}=Ax_n \ge A\!\left(\tau^{*}A^{\,n_0-1}v_0\right)
\ge \varphi(\tau^{*})A^{\,n_0}v_0
\ge \sigma_0\varphi(\tau^{*})A^{\,n_0-1}v_0
= \tau^{*}A^{\,n_0-1}v_0 .
\]

Now using~\eqref{Khachatryan6}, from~\eqref{Khachatryan9} we obtain
\begin{equation}\label{Khachatryan12}
\theta \le x_n-x_{n+1}
\le \Bigl(1-\underbrace{\varphi(\varphi\cdots\varphi(\sigma_0))}_{n}\Bigr)
A^{\,n_0-1}v_0,
\qquad n=1,2,\ldots
\end{equation}
where $\theta$ is the zero element of the cone $K$.

Since the cone $K$ is normal, it follows from \eqref{Khachatryan12}
that there exists a number $N_0>0$ such that
\begin{equation}\label{Khachatryan13}
\|x_n-x_{n+1}\|
\le
N_0\Bigl(1-\underbrace{\varphi(\varphi\cdots\varphi(\sigma_0))}_{n}\Bigr)
\bigl\|A^{\,n_0-1}v_0\bigr\|,
\qquad n=1,2,\ldots
\end{equation}

Now, using estimate (3.6) from paper~\cite{khach9}, from \eqref{Khachatryan13} we obtain
\begin{equation}\label{Khachatryan14}
\|x_n-x_{n+1}\|
\le
N_0(1-\sigma_0)\,\bigl\|A^{\,n_0-1}v_0\bigr\|\,k^{\,n},
\qquad n=1,2,\ldots,
\end{equation}
where
\begin{equation}\label{Khachatryan15}
k:=\frac{1-\varphi(\sigma_0)}{1-\sigma_0}\in(0,1).
\end{equation}

Writing inequality \eqref{Khachatryan14} for the indices $n+1,n+2,\ldots,n+m$
and using the triangle inequality, we obtain
\[
\|x_n-x_{n+m+1}\|
\le
\|x_n-x_{n+1}\|+\|x_{n+1}-x_{n+2}\|+\cdots+\|x_{n+m}-x_{n+m+1}\|\leq
\]
\[
\le
N_0(1-\sigma_0)\,\bigl\|A^{\,n_0-1}v_0\bigr\|\,
k^{\,n}\,(1+k+\cdots+k^{\,m})
\le
\frac{N_0(1-\sigma_0)\,\|A^{\,n_0-1}v_0\|}{1-k}\,k^{\,n},
\]
\[
\ n,m=1,2,\ldots.
\]

Thus, based on \eqref{Khachatryan15}, we obtain
\begin{equation}\label{Khachatryan16}
\|x_n-x_{n+m+1}\|
\le
\frac{N_0(1-\sigma_0)^2\,\|A^{\,n_0-1}v_0\|}{\varphi(\sigma_0)-\sigma_0}\,
k^{\,n},
\qquad n,m=1,2,\ldots.
\end{equation}

From \eqref{Khachatryan16}, in view of \eqref{Khachatryan15}, it follows that $\{x_n\}_{n=0}^{\infty}$ is a Cauchy sequence.
Since the space $E$ is complete, there exists $x^{*}\in E$ such that
$\|x_n-x^{*}\|\to 0$ as $n\to\infty$.
Since $K$ is closed, and $x_n\in K$, $n=0,1,\ldots,$
it follows that in fact $x^{*}\in K.$

Again, using that the cone $K$ is closed and taking into account
inequality~\eqref{Khachatryan11}, we obtain
\begin{equation}\label{Khachatryan17}
x^{*} \ge \tau^{*} A^{\,n_0-1} v_0.
\end{equation}

Consequently, $x^{*}\neq\theta.$

Letting the index $m$ tend to infinity in~\eqref{Khachatryan16}, and using
the continuity of the norm, we obtain
\begin{equation}\label{Khachatryan18}
\|x_n-x^{*}\|
\le
\frac{N_0(1-\sigma_0)^2\,\|A^{\,n_0-1}v_0\|}
{\varphi(\sigma_0)-\sigma_0},
\qquad n=1,2,\ldots.
\end{equation}

Thus, in order to complete the proof of the theorem,
we now verify that
$
Ax^{*}=x^{*}.
$

First, note that from~\eqref{Khachatryan6} it follows that
$
x^{*}\le x_n,$ $ n=0,1,\ldots,
$
whence, by the monotonicity of the operator $A,$ we obtain
\begin{equation}\label{Khachatryan19}
Ax^{*}\le Ax_n=x_{n+1}.
\end{equation}

From~\eqref{Khachatryan19}, taking into account that the cone $K$ is closed, it immediately follows that
\begin{equation}\label{Khachatryan20}
Ax^{*}\le x^{*}.
\end{equation}

Now, taking into account~\eqref{Khachatryan5}, \eqref{Khachatryan17} and~\eqref{Khachatryan20},
by monotonicity and strong concavity of the operator $A,$ we obtain
\begin{equation}\label{Khachatryan21}
\varphi(\tau^{*})\,x_1 \le Ax^{*} \le x^{*}.
\end{equation}

Again, using monotonicity and strong concavity of the operator $A,$
from~\eqref{Khachatryan21} we obtain
\[
\varphi\!\bigl(\varphi(\tau^{*})\bigr)\,x_2
\le
A(Ax^{*})
\le
Ax^{*}.
\]

By continuing this process for arbitrary $n\in\mathbb{N},$
we obtain the following chain of inequalities:
\[
\underbrace{\varphi(\varphi\cdots\varphi(\tau^{*}))}_{n}\,x_n
\le A^{n}x^{*}\le A^{n-1}x^{*}\le \cdots \le Ax^{*},
\]
whence, taking into account estimate (3.6) (see~\cite{khach9}), we obtain
\begin{equation}\label{Khachatryan22}
\Bigl(1-(1-\tau^{*})k_{*}^{\,n}\Bigr)x_n \le Ax^{*},
\qquad n=1,2,\ldots,
\end{equation}
where
\begin{equation}\label{Khachatryan23}
k_{*}:=\frac{1-\varphi(\tau^{*})}{1-\tau^{*}}\in(0,1).
\end{equation}

Letting the index $n$ tend to infinity in~\eqref{Khachatryan22} and taking into account
\eqref{Khachatryan23} and that the cone $K$ is closed, we obtain the inequality
\begin{equation}\label{Khachatryan24}
x^{*}\le Ax^{*}.
\end{equation}

From \eqref{Khachatryan20} and \eqref{Khachatryan24} it follows that
$
Ax^{*}=x^{*}.
$
The theorem is thus proved.
\end{proof}
\medskip
\begin{remark}
Below we present an example of a critical operator
$A:K\to K$ that is discontinuous yet satisfies conditions of Theorem~\ref{thm1} and as a consequence possesses
a nonzero fixed point. Let $E=\ell_\infty,$ and let
\[
K=\{\text{x}=(x_1,x_2,\ldots)\in \ell_\infty:\; x_i\ge0,\ i\in\mathbb{N}\}.
\]
Define the operator $A$ as follows:
$
A\text{x}=\text{y},\ \text{x}\in K,
$
where $\text{y}=(y_1,y_2,\ldots)\in K$,
$
y_n=\min\left\{n^{\frac14}\sqrt{x_n},\,1\right\}, \ n\in\mathbb{N}.
$
Obviously, $A:K\to K,$ and $A\theta=\theta,$ where
$\theta=(0,0,\ldots),$
and, for all $\emph{x,y}\in K$ from $\theta\le \text{x}\le \emph{y,}$ it follows that
$A\text{x}\le A\text{y}$.

Note that the operator $A$ is discontinuous
at the point $\theta\in K.$
Indeed, consider the following sequence
$
\text{x}^{(k)}=(x^{(k)}_1,x^{(k)}_2,\ldots)\in K,
$
where
\[
x_n^{(k)}=
\begin{cases}
\dfrac{1}{\sqrt{k}}, & n=k,\\[6pt]
0, & n\ne k,
\end{cases}
\]
then clearly
$
\left|\left|\text{x}^{(k)}\right|\right|_{\ell_\infty}=\frac1{\sqrt{k}}\to0,
\ k\to\infty.
$
However
\[
\left|\left|A\text{x}^{(k)}\right|\right|_{\ell_\infty}
=\sup\limits_{n\in\mathbb{N}}
\min\left\{n^{\frac14}\sqrt{x_n^{(k)}},1\right\}
=1.
\]

Choose $v_0,$ $\sigma_0$ and $n_0$ respectively as
$
v_0=(2,2,\ldots)\in K,
\
\sigma_0=\dfrac13, \ n_0=1.
$
Let us verify condition \eqref{Khachatryan4}.
Indeed, from the definition of the operator $A$ it immediately follows that $Av_0$'s $n$th coordinate satisfies
$$
(Av_0)_n=\min\{n^{\frac14}\sqrt{2},1\}=1\le2=(v_0)_n,
$$
$$
(Av_0)_n=1\ge\frac13\,(v_0)_n,\qquad n\in\mathbb{N}.
$$

Finally, let us verify the strong concavity condition.
Taking the mapping $\varphi(\sigma)$ to be
$
\varphi(\sigma)=\sqrt{\sigma},
$
from the definition of the operator $A,$ we obtain
$$
(A(\sigma u))_n
=\min\left\{n^{\frac14}\sqrt{\sigma u_n},\,1\right\}
=\min\left\{\sqrt{\sigma}\,n^{\frac14}\sqrt{u_n},\,1\right\}\ge
$$
$$
\ge
\sqrt{\sigma}\,
\min\left\{n^{\frac14}\sqrt{u_n},\,1\right\}
=
\sqrt{\sigma}\,(Au)_n,
\qquad n\in\mathbb{N},
$$
where $u=(u_1,u_2,\ldots)\in K$.

The validity of the inequality
\[
\min\left\{\sqrt{\sigma}\,n^{\frac14}\sqrt{u_n},\,1\right\}
\ge
\sqrt{\sigma}\,
\min\left\{n^{\frac14}\sqrt{u_n},\,1\right\},
\qquad n\in\mathbb{N},\; \sigma\in[0,1],
\]
can be verified by considering the cases
$
n^{\frac14}\sqrt{u_n}\le1
\quad\text{and}\quad
n^{\frac14}\sqrt{u_n}>1
$ separately.

Using the fact that in this case
the cone $K$ is normal (since the norm is monotone)
and applying Theorem~\ref{thm1}, we conclude that the operator $A$
has a nonzero fixed point
$
\text{x}^*\in K.
$
One can directly check that
$
\text{x}^*=(1,1,\ldots).
$
\end{remark}

\begin{remark}
It should be noted that in the course of the proof of Theorem~\ref{thm1} we also obtained
the following two-sided inequality for the fixed point $x^{*}$:
\begin{equation}\label{Khachatryan25}
\tau^{*}A^{\,n_0-1}v_0 \le x^{*}\le A^{\,n_0-1}v_0 .
\end{equation}
\end{remark}

Let $r_0>1$ be an arbitrary number.
Consider the following characteristic equation on $[1,+\infty)$:
\begin{equation}\label{Khachatryan26}
\delta\,\varphi\!\left(\frac{1}{\delta}\right)=r_0.
\end{equation}

By repeating the same arguments used in the study of the characteristic equation~\eqref{Khachatryan10}, one can prove that equation~\eqref{Khachatryan26} has a unique solution $\delta>r_0$.

Taking into account~\eqref{Khachatryan26}, we can prove the following theorem in an analogous way to the proof of
Theorem~\ref{thm1}.

\medskip

\begin{theorem}\label{thm2}
Let $A\colon K\to K$ be a monotone operator, and let the cone $K$ be
normal. Suppose that there exist numbers $r_0>1$, $n_0\in\mathbb{N},$
and an element $v_0\in K$, $v_0\neq\theta$, such that
\begin{equation}\label{Khachatryan27}
A^{\,n_0-1}v_0 \le A^{\,n_0}v_0 \le r_0\,A^{\,n_0-1}v_0 .
\end{equation}

Then, if the operator $A$ is strongly concave on
$
K_1:=\langle A^{\,n_0-1}v_0,\ \delta A^{\,n_0-1}v_0\rangle \subset K,
$ where $\delta>r_0$ is the unique solution of the characteristic
equation~\eqref{Khachatryan26}, then
\begin{enumerate}
\item[1)] there exists $x^{*}\in K$, $x^{*}\neq\theta$, such that
$
Ax^{*}=x^{*};
$
\item[2)] there exist numbers $C_0>0$ and $k_0\in(0,1)$ such that
\[
\|A^{n}v_0-x^{*}\|\le C_0\,k_0^{\,n},
\qquad n=n_0,n_0+1,\ldots.
\]
\end{enumerate}\qed
\end{theorem}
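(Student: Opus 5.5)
We follow the scheme of the proof of Theorem~\ref{thm1}, but now with a monotonically \emph{nondecreasing} iteration. Set $x_0:=A^{\,n_0-1}v_0$ and $x_{n+1}=Ax_n$, $n=0,1,\ldots$. The left inequality in \eqref{Khachatryan27} is $x_0\le x_1$. By monotonicity of $A$ and induction this gives $x_n\le x_{n+1}$ for all $n$, and the right inequality reads $x_1\le r_0x_0$.

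\emph{Step 1: the iterates stay in $K_1$.} We prove by induction that $x_0\le x_n\le \delta x_0$. Since $\delta x_0\in K_1$, strong concavity \eqref{Khachatryan3} with $\sigma=1/\delta$ and $u=\delta x_0$ gives
\[
x_1=Ax_0=A\Bigl(\tfrac1\delta\,\delta x_0\Bigr)\ge \varphi\Bigl(\tfrac1\delta\Bigr)A(\delta x_0).
\]
Combining this with $x_1\le r_0x_0$ and the characteristic equation \eqref{Khachatryan26}, we get
\[
A(\delta x_0)\le \frac{r_0}{\varphi(1/\delta)}\,x_0=\delta x_0 .
\]
Hence $x_n\le\delta x_0$ implies $x_{n+1}=Ax_n\le A(\delta x_0)\le\delta x_0$. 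So every $x_n$ lies in $K_1$, and strong concavity may be applied at every $x_n$. This is the only place where $\delta$ is used, and it is the analogue of the lower bound \eqref{Khachatryan11}.

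\emph{Step 2: ratio estimates and geometric rate.} Let $\psi_n$ denote the $n$-fold iterate of $\varphi$ at the point $1/r_0$. From $x_0\ge \frac1{r_0}x_1$ we get
\[
x_1=Ax_0\ge A\Bigl(\tfrac1{r_0}x_1\Bigr)\ge\varphi\Bigl(\tfrac1{r_0}\Bigr)x_2 .
\]
Iterating exactly as in \eqref{Khachatryan9} yields $\psi_n\,x_{n+1}\le x_n\le x_{n+1}$. Since $\psi_n\ge 1/r_0$ and $x_n\le\delta x_0$, it follows that
\[
\theta\le x_{n+1}-x_n\le \frac{1-\psi_n}{\psi_n}\,x_n\le r_0\delta\,(1-\psi_n)\,x_0 .
\]
Estimate (3.6) of \cite{khach9} gives $1-\psi_n\le(1-\frac1{r_0})k_0^n$ with
\[
k_0=\frac{1-\varphi(1/r_0)}{1-1/r_0}\in(0,1).
\]
Thus $x_{n+1}-x_n\le Mk_0^nx_0$ in the cone order. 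By normality (semi-monotone norm) the sequence $\{x_n\}$ is Cauchy, exactly as in \eqref{Khachatryan16}. Let $x^*=\lim x_n$. Since $K$ is closed, $x_0\le x^*\le\delta x_0$, so $x^*\ne\theta$ and $x^*\in K_1$. Summing the order inequalities and passing to the limit (again using closedness of $K$) gives the order tail bound
\[
x^*-x_n\le \frac{M}{1-k_0}\,k_0^n\,x_0=:\varepsilon_n x_0\le\varepsilon_n x_n .
\]
Normality then yields assertion 2).

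\emph{Step 3: $Ax^*=x^*$.} From $x_n\le x^*$ and monotonicity, $x_{n+1}\le Ax^*$; letting $n\to\infty$ gives $x^*\le Ax^*$. Conversely, the tail bound gives $x_n\ge\frac{1}{1+\varepsilon_n}x^*$. Since $x^*\in K_1$, strong concavity gives
\[
x_{n+1}=Ax_n\ge\varphi\Bigl(\tfrac1{1+\varepsilon_n}\Bigr)Ax^* .
\]
As $\varepsilon_n\to0$, continuity of $\varphi$ with $\varphi(1)=1$ and closedness of $K$ give $x^*\ge Ax^*$. Together the two inequalities prove assertion 1).

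I expect the main obstacle to be Step 1, the invariance $x_n\le\delta x_0$. It is what makes strong concavity usable, since that property is only assumed on $K_1$, and it is exactly where the choice of $\delta$ via \eqref{Khachatryan26} is needed. A secondary point needing care is the order-valued tail estimate in Step 3, which replaces the argument of \eqref{Khachatryan21}--\eqref{Khachatryan24}.
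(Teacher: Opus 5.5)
Your proof is correct and is essentially the argument the paper has in mind when it says Theorem~\ref{thm2} is proved ``analogously to Theorem~\ref{thm1}''. Your Step~1 is exactly the invariance estimate $x_{n+1}\le A(\delta x_0)\le \frac{r_0}{\varphi(1/\delta)}\,x_0=\delta x_0$ that the paper writes out in the proof of Theorem~\ref{thm3}, and your Step~2 is the increasing-order counterpart of \eqref{Khachatryan9}--\eqref{Khachatryan16} combined with estimate (3.6) of \cite{khach9}.

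The only deviation is in showing $Ax^*\le x^*$: instead of iterating strong concavity along $A^n x^*$ as in \eqref{Khachatryan21}--\eqref{Khachatryan24}, you use the order tail bound $x^*\le(1+\varepsilon_n)x_n$ and apply strong concavity once at $x^*\in K_1$, which is slightly more direct. One caveat, shared with the paper's own proof of Theorem~\ref{thm1}: concluding $x^*\neq\theta$ from $x_0\le x^*$ tacitly requires $A^{n_0-1}v_0\neq\theta$. This is automatic only for $n_0=1$; for $n_0\ge 2$ the operator $A\equiv\theta$ satisfies all the hypotheses but has no nonzero fixed point.
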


\medskip

\begin{remark}
Note that in Theorems~\ref{thm1} and~\ref{thm2} the corresponding conditions~\eqref{Khachatryan4} and~\eqref{Khachatryan27}
ensure the monotonicity of the successive approximations~\eqref{Khachatryan5}.
In the case of condition~\eqref{Khachatryan4} these approximations are monotonically decreasing,
whereas in the case of condition~\eqref{Khachatryan27} they are monotonically increasing.
The strong concavity property of the operator $A,$ together with the monotonicity of the successive approximations~\eqref{Khachatryan5}
makes it possible to prove the existence of a fixed point theorem without requiring continuity of the operator $A$.
\end{remark}

An interesting question arises: if the operator
$A\colon K\to K$ is continuous, monotone and strongly
concave, then can condition~\eqref{Khachatryan4} (or condition~\eqref{Khachatryan27})
be weakened while still being able to prove a result analogous to Theorems~\ref{thm1} and~\ref{thm2}?

The methods developed for proving Theorems~\ref{thm1} and~\ref{thm2} allow us
to answer this question in a certain sense that will be clarified below. In particular, we will request that the operator is continuous. Specifically, the following theorem holds.

\medskip

\begin{theorem}\label{thm3}
Let $A\colon K\to K$ be a monotone and continuous operator,
and let the cone $K$ be normal.
Suppose that there exist numbers
$
r_1\in(0,1), \ r_2\in(1,+\infty), \ n_0\in\mathbb{N},
$
and an element $v_0\in K$, $v_0\neq\theta$ such that
\begin{equation}\label{Khachatryan28}
r_1 A^{\,n_0-1}v_0 \le A^{\,n_0}v_0 \le r_2 A^{\,n_0-1}v_0 .
\end{equation}

If the operator $A$ is strongly concave on
$
K_1:=\langle \tau_1 A^{\,n_0-1}v_0,\ \delta_1 A^{\,n_0-1}v_0\rangle
\subset K,
$
where $\tau_1\in(0,r_1)$, $\delta_1\in(r_2,+\infty)$ are
the unique solutions of the characteristic equations
$
\varphi(\tau)=\frac{\tau}{r_1},
\
\delta\,\varphi\!\left(\frac{1}{\delta}\right)=r_2,
$
respectively, then
\begin{enumerate}
\item[1)] there exists $x^{*}\in K$, $x^{*}\neq\theta$, such that
$
Ax^{*}=x^{*};
$
\item[2)] there exist numbers $C_{*}>0$ and $k_{*}\in(0,1)$ such that
\[
\|A^{n}v_0-x^{*}\|\le C_{*}k_{*}^{\,n},
\qquad n=n_0,n_0+1,\ldots.
\]
\end{enumerate}
\end{theorem}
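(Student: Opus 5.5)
Set $w:=A^{\,n_0-1}v_0$ and work with the conical segment $\langle \tau_1 w,\ \delta_1 w\rangle=K_1$. Since $\tau_1<r_1<1<r_2<\delta_1$, both $w$ and $\delta_1 w$ lie in $K_1$, so strong concavity \eqref{Khachatryan3} can be applied at these points. I also need $w\neq\theta$; otherwise $K_1=\{\theta\}$ and the theorem gives nothing. I take this as implicit in the hypotheses, as in Theorems~\ref{thm1} and~\ref{thm2}.

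\textbf{Step 1: invariance of $K_1$.} By monotonicity, strong concavity at $w$, the left part of \eqref{Khachatryan28} and $r_1\varphi(\tau_1)=\tau_1$,
\[
A(\tau_1 w)\ge \varphi(\tau_1)Aw\ge r_1\varphi(\tau_1)w=\tau_1 w .
\]
For the upper end, write $w=\frac1{\delta_1}(\delta_1 w)$ and apply strong concavity at $\delta_1 w$:
\[
Aw\ge \varphi(1/\delta_1)\,A(\delta_1 w),
\qquad\text{so}\qquad
A(\delta_1 w)\le \frac{r_2}{\varphi(1/\delta_1)}\,w=\delta_1 w,
\]
using $\delta_1\varphi(1/\delta_1)=r_2$. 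Hence $A$ maps $K_1$ into itself. Monotonicity then makes $y_n:=A^n(\tau_1 w)$ nondecreasing and $z_n:=A^n(\delta_1 w)$ nonincreasing, with $y_n\le A^{n}w\le z_n$ and all of them in $K_1$. The middle inequality holds because $\tau_1 w\le w\le \delta_1 w$.

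\textbf{Step 2: two-sided squeezing.} Put $s:=\tau_1/\delta_1\in(0,1)$, so $y_0=s z_0$. Since $z_0\in K_1$, strong concavity gives $y_1=A(sz_0)\ge \varphi(s)z_1$. Inductively, exactly as in the derivation of \eqref{Khachatryan9} and using $z_n\in K_1$ at each step,
\[
y_n\ \ge\ \underbrace{\varphi(\varphi\cdots\varphi(s))}_{n}\,z_n .
\]
Therefore
\[
\theta\le z_n-y_n\le \Bigl(1-\underbrace{\varphi(\varphi\cdots\varphi(s))}_{n}\Bigr)\delta_1 w .
\]
By estimate (3.6) of \cite{khach9}, as used in \eqref{Khachatryan14}, the right-hand coefficient is at most $(1-s)k_*^n$ with $k_*:=\frac{1-\varphi(s)}{1-s}\in(0,1)$. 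Normality then gives $\|z_n-y_n\|\le N_0\delta_1(1-s)\|w\|\,k_*^n$. This step, which converts the iterated-$\varphi$ bound into a geometric rate, is where the real work sits. It is handled by citing (3.6) exactly as in Theorem~\ref{thm1}, so I expect no new difficulty.

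\textbf{Step 3: convergence and fixed point.} For $m\ge 0$ we have $\theta\le z_n-z_{n+m}\le z_n-y_n$, so by normality $\{z_n\}$ is Cauchy; similarly $\theta\le y_{n+m}-y_n\le z_n-y_n$ shows $\{y_n\}$ is Cauchy. Both converge to a common limit $x^*\in K$ with $y_n\le x^*\le z_n$, using closedness of $K$. Continuity of $A$ in $x_{n+1}=Ax_n$, applied to $z_{n+1}=Az_n$, gives $Ax^*=x^*$. Alternatively one can repeat the continuity-free argument of Theorem~\ref{thm1}. Moreover $x^*\ge\tau_1 w\neq\theta$, so $x^*\ne\theta$.

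\textbf{Step 4: rate for the original orbit.} Since $A^{n}v_0=A^{n-n_0+1}w$, Step 1 gives $y_{n-n_0+1}\le A^n v_0\le z_{n-n_0+1}$. Both $A^nv_0$ and $x^*$ lie in this order interval, so their difference satisfies
\[
-(z_{j}-y_{j})\le A^n v_0-x^*\le z_{j}-y_{j},\qquad j=n-n_0+1 .
\]
Normality then yields $\|A^nv_0-x^*\|\le 2N_0\|z_j-y_j\|\le C_*k_*^{\,n}$, with $C_*$ absorbing the factor $k_*^{-(n_0-1)}$ and the normality constants. This proves 2).
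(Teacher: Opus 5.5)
Your proof is correct, but it takes a different route from the paper's. The paper works only with the given orbit $x_n=A^n w$. It proves the ratio bounds $\varphi_n(r_1)\,x_n\le x_{n+1}\le x_n/\varphi_n(1/r_2)$, where $\varphi_n$ is the $n$-fold iterate of $\varphi$, together with $\tau_1 w\le x_n\le\delta_1 w$; the latter is exactly your Step 1 invariance computation. It then bounds consecutive differences by $\|x_n-x_{n+1}\|\le C\tilde k^{\,n}$, with $\tilde k=\max\{\tilde k_1,\tilde k_2\}$ built from $r_1$ and $1/r_2$. Since that orbit is not monotone, the paper keeps the continuity of $A$ to identify the limit as a fixed point.

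You instead run the monotone sub/supersolution scheme from the two endpoints of $K_1$ and squeeze everything using the single ratio $s=\tau_1/\delta_1$. This gives more than the statement asks for:
\begin{enumerate}
\item[(a)] The sandwich $y_{n+1}=Ay_n\le Ax^*\le Az_n=z_{n+1}$ already yields $Ax^*=x^*$ from monotonicity and closedness of $K$, so the continuity hypothesis is superfluous in your argument.
\item[(b)] Any fixed point in $K_1$ is trapped between $y_n$ and $z_n$, hence equals $x^*$.
\item[(c)] $A^n u\to x^*$ at a geometric rate for every $u\in K_1$, not only along the given orbit.
\end{enumerate}
The price is a weaker guaranteed ratio. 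The chord slope $(1-\varphi(a))/(1-a)$ of the concave $\varphi$ is nonincreasing in $a$, and $s<\min\{r_1,1/r_2\}$. So your $k_*=(1-\varphi(s))/(1-s)$ is no smaller than the paper's $\tilde k$.

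Finally, your caveat that $w=A^{n_0-1}v_0\neq\theta$ must be assumed is justified. It is an omission in the statement itself: take $A\equiv\theta$ and $n_0=2$, and all hypotheses hold while no nonzero fixed point exists. The paper's proof also uses this tacitly when it concludes $x^*\neq\theta$.
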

\begin{proof}
Consider the successive approximations~\eqref{Khachatryan5}.
Arguing in a way analogous to the proof of inequalities~\eqref{Khachatryan9}
and~\eqref{Khachatryan11}, and using induction with respect to $n$ as well as using
condition~\eqref{Khachatryan28}, one can prove that the sequence
$\{x_n\}_{n=0}^{\infty}$ satisfies the following inequalities:
\begin{equation}\label{Khachatryan29}
\underbrace{\varphi(\varphi\cdots\varphi(r_1))}_{n}\,x_n
\le x_{n+1}
\le
\frac{1}{\underbrace{\varphi(\varphi\cdots\varphi(1/r_2))}_{n}}\,x_n,
\qquad n=1,2,\ldots,
\end{equation}
\begin{equation}\label{Khachatryan30}
x_n \ge \tau_1 A^{\,n_0-1}v_0,
\qquad n=0,1,\ldots.
\end{equation}

Let us now prove that
\begin{equation}\label{Khachatryan31}
x_n \le \delta_1 A^{\,n_0-1}v_0,
\qquad n=0,1,\ldots.
\end{equation}

In the case $n=0$, estimate~\eqref{Khachatryan31} follows immediately
from the definition of the initial approximation in the iterations~\eqref{Khachatryan5}
taking into account the inequalities $\delta_1>r_2>1$.
Assume that~\eqref{Khachatryan31} holds for some $n\in\mathbb{N}$.
Then, taking into account the monotonicity and strong concavity
of the operator $A$, as well as~\eqref{Khachatryan28} and the equality
\[
\delta_1\varphi\!\left(\frac{1}{\delta_1}\right)=r_2,
\]
from~\eqref{Khachatryan5} we obtain
\[
x_{n+1}=Ax_n
\le A\!\left(\delta_1 A^{\,n_0-1}v_0\right)
\le
\frac{1}{\varphi(1/\delta_1)}\,A^{\,n_0}v_0
\le$$$$\le
\frac{r_2}{\varphi(1/\delta_1)}\,A^{\,n_0-1}v_0
=
\delta_1 A^{\,n_0-1}v_0 .
\]

From~\eqref{Khachatryan29} and~\eqref{Khachatryan31} it follows that
\begin{equation}\label{Khachatryan32}
\begin{array}{c}
\theta
\le \frac{1}{\underbrace{\varphi(\varphi\cdots\varphi(1/r_2))}_{n}} x_n-x_{n+1}
\le\\ \le
\Bigl(\frac{1}{\underbrace{\varphi(\varphi\cdots\varphi(1/r_2))}_{n}}-\underbrace{\varphi(\varphi\cdots\varphi(r_1))}_{n}\Bigr)
\,\delta_1 A^{\,n_0-1}v_0 .
\end{array}
\end{equation}

Taking into account the normality of the cone $K$ and inequality~\eqref{Khachatryan32},
one can assert that there exists a number $N_0>0$ such that
\begin{equation}\label{Khachatryan33}
\begin{array}{c}
\left\|
\frac{1}{\underbrace{\varphi(\varphi\cdots\varphi(1/r_2))}_{n}}
x_n-x_{n+1}
\right\|
\le\\ \le
N_0\,\delta_1\,\|A^{\,n_0-1}v_0\|
\left(
\frac{1}{\underbrace{\varphi(\varphi\cdots\varphi(1/r_2))}_{n}}
-
\underbrace{\varphi(\varphi\cdots\varphi(r_1))}_{n}
\right).
\end{array}
\end{equation}

From~\eqref{Khachatryan33}, taking into account estimate~(3.6) from paper~\cite{khach9},
as well as~\eqref{Khachatryan30}, \eqref{Khachatryan31} and the normality of the cone $K$,
it follows that
\[
\|x_n-x_{n+1}\|
\le
\left(
\frac{1}{\underbrace{\varphi(\varphi\cdots\varphi(1/r_2))}_{n}}
-1
\right)\|x_n\|
+
\left\|
\frac{1}{\underbrace{\varphi(\varphi\cdots\varphi(1/r_2))}_{n}}
x_n-x_{n+1}
\right\|\le
\]
\[
\le
N_1(r_2-1)\,\tilde k_2^{\,n}\,
\delta_1\|A^{\,n_0-1}v_0\|
+
\bigl(N_0\delta_1(r_2-1)\tilde k_2^{\,n}
+
N_0\delta_1(1-r_1)\tilde k_1^{\,n}\bigr)
\|A^{\,n_0-1}v_0\|\le
\]
\[
\le
\bigl(N_1(r_2-1)+N_0(r_2-r_1)\bigr)\,
\delta_1\,\|A^{\,n_0-1}v_0\|\,\tilde k^{\,n},
\qquad n=1,2,\ldots,
\]
where $N_1>0$
\[
\tilde k_1:=\frac{1-\varphi(1/r_2)}{1-1/r_2}\in(0,1),
\
\tilde k_2:=\frac{1-\varphi(r_1)}{1-r_1}\in(0,1),\
\tilde k:=\max\{\tilde k_1,\tilde k_2\}\in(0,1).
\]

Further, performing steps analogous to those in the proof of
Theorem~\ref{thm1} completes the proof.
\end{proof}

\section{Some consequences and a generalization of the existence theorems for fixed points}\label{sec3}

The following theorem is a is a consequence of Theorem~\ref{thm1}:

\medskip
\begin{theorem}\label{thm4}
Let $A_0\colon K\to K$ be a monotone and left-continuous operator,
and let $K$ be a proper cone.
Then, if there exist numbers $n_0\in\mathbb{N}$,
$\alpha,\gamma_0\in(0,1)$ and an element $v_0\in K$, $v_0\neq\theta$ such that
the operator $A_0$ satisfies the following two-sided inequality:
\begin{equation}\label{Khachatryan34}
A^{n_0-1}v_0-\gamma_0^{\,1-\alpha}
A\!\left(A^{n_0-1}v_0-u\right)
\;\ge\;
A_0u
\;\ge\;
A^{n_0-1}v_0-
A\!\left(A^{n_0-1}v_0-u\right)
\end{equation}
for all $u\in \langle \theta, A^{n_0-1}v_0 \rangle$,
where the operator $A$ satisfies the assumptions of Theorem~\ref{thm1} with
$\varphi(\sigma)=\sigma^{\alpha}$ and
$A^{n_0}v_0\neq A^{n_0-1}v_0$,
then there exists an element
$
\tilde{x}\in \langle \theta, A^{n_0-1}v_0 \rangle,\ \tilde{x}\neq\theta,\
\tilde{x}\neq A^{n_0-1}v_0
$
such that
$
A_0\tilde{x}=\tilde{x}.
$
\end{theorem}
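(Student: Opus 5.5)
Write $w:=A^{n_0-1}v_0$. By Theorem~\ref{thm1} applied to $A$ with $\varphi(\sigma)=\sigma^\alpha$, there is a fixed point $x^*$ of $A$ with $\tau^*w\le x^*\le w$ (see \eqref{Khachatryan25}), where $\tau^*=\sigma_0^{1/(1-\alpha)}$ solves $\tau^\alpha=\tau/\sigma_0$. The idea is to pass to the ``reflected'' variable $y=w-u$. The inequality \eqref{Khachatryan34} then says that $B y:=w-A_0(w-y)$ satisfies
\[
\gamma_0^{1-\alpha}Ay\le By\le Ay,\qquad y\in\langle\theta,w\rangle,
\]
so fixed points of $A_0$ in $\langle\theta,w\rangle$ correspond exactly to fixed points of $B$ in $\langle\theta,w\rangle$. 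The map $B$ is monotone because $A_0$ is, and it is left-continuous in the reversed sense: if $y_n\downarrow y$ then $w-y_n\uparrow w-y$. I will build a decreasing iteration for $B$ and pass to the limit using left-continuity of $A_0$. Since $K$ is only assumed proper (not normal), I will get the limit from order/monotone-sequence arguments rather than Cauchy estimates. If this turns out to be impossible, I will look for a sup/inf argument instead; this is the delicate point.

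\textbf{Upper and lower bracketing.} First, $Bw\le Aw\le w$, so the sequence $y_0=w$, $y_{n+1}=By_n$ is decreasing, by monotonicity of $B$ and induction. Next I need a strictly positive lower barrier $z$ with $Bz\ge z$. I take $z=c\,x^*$ with $c\in(0,1]$. By strong concavity, $B(cx^*)\ge\gamma_0^{1-\alpha}A(cx^*)\ge\gamma_0^{1-\alpha}c^\alpha x^*$. This is $\ge cx^*$ as soon as $c^{1-\alpha}\le\gamma_0^{1-\alpha}$, i.e.\ $c=\gamma_0$. So $z:=\gamma_0x^*$ is a sub-solution of $B$, and $\theta\ne z\le w$. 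By induction $y_n\ge z$ for all $n$, because $y_n\ge z$ gives $y_{n+1}=By_n\ge Bz\ge z$.

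\textbf{Passing to the limit.} I then need $y_n$ to converge to some $\tilde y$. To get this I mimic the proof of Theorem~\ref{thm1}. The relation $\sigma_0 w\le Aw$ together with $By\ge\gamma_0^{1-\alpha}Ay$ should give two-sided bounds $\lambda_n y_n\le y_{n+1}\le y_n$ with $\lambda_n\to1$ geometrically. Without normality I would instead use that the cone is closed and the order is Archimedean on $\langle z,w\rangle$. If a limit in norm cannot be obtained, I will use the $w$-norm, in which a monotone sequence satisfying such ratio bounds is Cauchy. For the limit I then set $\tilde x=w-\tilde y$. By construction $u_n:=w-y_n$ increases, with $u_{n+1}=A_0u_n$, so left-continuity of $A_0$ gives $A_0\tilde x=\lim A_0u_n=\lim u_{n+1}=\tilde x$.

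\textbf{Nontriviality.} We have $\tilde x\ne w$ because $\tilde y\ge z=\gamma_0x^*\ne\theta$. We also need $\tilde x\neq\theta$, i.e.\ $\tilde y\ne w$. Since $y_n$ decreases, it suffices that $y_1=Bw\ne w$. This holds because $Bw\le Aw\le w$ and $Aw\ne w$, which is exactly the hypothesis $A^{n_0}v_0\ne A^{n_0-1}v_0$; since $K$ is proper, $y_1\le Aw\lneq w$ gives $\tilde y\le y_1\ne w$. The main obstacle is the convergence step without normality: obtaining quantitative ratio bounds for $B$, which is only sandwiched between multiples of $A$ rather than being strongly concave itself. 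My first attempt will be to compare $y_n$ with the $A$-iterates $x_n$, using $\gamma_0^{1-\alpha}$-scaled sandwich estimates, to obtain $y_n$-bounds of the form $c_n x_n$.
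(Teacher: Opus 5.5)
Your reflection $y=w-u$, $By=w-A_0(w-y)$, the sandwich $\gamma_0^{1-\alpha}Ay\le By\le Ay$ on $\langle\theta,w\rangle$, the barrier $z=\gamma_0x^*$, and both nontriviality arguments are correct. The barrier is exactly the paper's estimate $x_n\le w-\gamma_0x^*$ read in the $y$-variable. Starting from $u_0=\theta$ instead of the paper's $u_0=w-x^*$ is also fine, since $u_1=A_0\theta\ge w-Aw\ne\theta$.

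The genuine gap is the convergence of the monotone sequence. You leave it open, and the routes you sketch cannot close it. Ratio bounds $\lambda_ny_n\le y_{n+1}$ with $\lambda_n\to1$ are not available for $B$. From $y_n\ge\lambda y_{n-1}$, the sandwich and the strong concavity of $A$ only give $y_{n+1}\ge\gamma_0^{1-\alpha}\lambda^{\alpha}y_n$, and the recursion $\lambda\mapsto\gamma_0^{1-\alpha}\lambda^{\alpha}$ tends to $\gamma_0<1$, not to $1$. Comparing with the $A$-iterates has the same defect.

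No estimate of this kind can exist, because $B$ is not concave. Already for $E=\mathbb{R}$, $Ay=y^{\alpha}$, $w=2$, one can choose an admissible $A_0$ with $By=y-(y-\tilde y)^2$ near some $\tilde y\in(\gamma_0,1)$ and $By<y$ on $(\tilde y,2]$. Then $y_n\downarrow\tilde y$ only at rate $1/n$. Your Archimedean/$w$-norm fallback fails as well. In $C[0,1]$, whose cone is normal, the sequence $1+t^n$ decreases in $\langle 1,2\rangle$ but is not Cauchy in the sup norm, and the norm generated by $w\equiv2$ is equivalent to the sup norm.

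The missing idea is what ``proper'' means here. It plays the role of Krasnosel'skii's \emph{regular} cone defined in the introduction; the paper's proof opens with ``every proper cone is normal''. Your own first step, invoking Theorem~\ref{thm1}, already requires normality, so reading ``proper'' as weaker than normal is inconsistent with your argument. Regularity says precisely that a monotone sequence bounded from above converges in norm, and then no quantitative estimate is needed:
\begin{itemize}
\item $u_n=w-y_n$ is nondecreasing and bounded above by $w-\gamma_0x^*$, so it converges in norm to some $\tilde x$.
\item Left-continuity of $A_0$ gives $A_0\tilde x=\lim A_0u_n=\lim u_{n+1}=\tilde x$.
\item Closedness of $K$ gives $u_1\le\tilde x\le w-\gamma_0x^*$, and your nontriviality arguments finish the proof.
\end{itemize}
With this one step supplied, your argument is essentially the paper's, differing only in the starting point of the iteration.
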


\medskip
\begin{proof}
Since every proper cone is normal (see~\cite{kras1}),
and the function \linebreak $\varphi(\sigma)=\sigma^{\alpha}$, $\alpha\in(0,1),$
satisfies all properties required in the definition of strong
concavity of the operator $A,$ it follows from Theorem~\ref{thm1}, that there exists
$x^*\in K$, $x^*\neq\theta,$,
$x^*\le A^{n_0-1}v_0$ such that
$
Ax^*=x^*.
$

Now consider now the following successive approximations:
\begin{equation}\label{Khachatryan35}
x_{n+1}=A_0x_n,\qquad
x_0=A^{n_0-1}v_0-x^*,\qquad n=0,1,\ldots.
\end{equation}

By induction on $n,$ it is easy to verify that
$x_n\in K$, $n=0,1,\ldots$.

Let us now verify that
$
x_n \le x_{n+1}, \ n=0,1,\ldots.
$
Taking into account the right-hand side of inequality~\eqref{Khachatryan34}
and the equality
$Ax^*=x^*,$ from~\eqref{Khachatryan35} we obtain
\[
x_1=A_0x_0 \ge A^{n_0-1}v_0-
A\!\left(A^{n_0-1}v_0-x_0\right)
= A^{n_0-1}v_0-Ax^*
= A^{n_0-1}v_0-x^*=x_0.
\]

Suppose now that
$
x_{n-1}\le x_n
$
for some natural $n$.
Then, taking into account the monotonicity of the operator $A_0$
and the induction hypothesis, we obtain
\[
x_{n+1}=A_0x_n \ge A_0x_{n-1}=x_n.
\]

Let us now verify the following upper estimate:
\begin{equation}\label{Khachatryan36}
x_n \le A^{n_0-1}v_0-\gamma_0 x^*.
\qquad n=0,1,\ldots
\end{equation}

In the case $n=0$, this inequality follows immediately from inclusions
$\gamma_0\in(0,1)$ and $x^*\in K$.
Assume that inequality~\eqref{Khachatryan36} holds
for some $n\in\mathbb{N}$.
Then, using the monotonicity of the operator $A_0$
the strong concavity of the operator $A$, and
by the induction hypothesis, we have
\begin{align*}
x_{n+1}
&=A_0x_n
\le A_0\!\left(A^{n_0-1}v_0-\gamma_0 x^*\right)\le \\
&\le A^{n_0-1}v_0-
\gamma_0^{\,1-\alpha}
A\!\left(A^{n_0-1}v_0-A^{n_0-1}v_0+\gamma_0x^*\right)= \\
&=A^{n_0-1}v_0-
\gamma_0^{\,1-\alpha}A(\gamma_0x^*)
\le A^{n_0-1}v_0-\gamma_0 x^*.
\end{align*}

Thus, by virtue of~\eqref{Khachatryan36}, the monotonicity of
$\{x_n\}_{n=0}^{\infty}$ and the properness of the cone $K$,
we conclude that the sequence
$\{x_n\}_{n=0}^{\infty}$ converges in the norm to some element
$\tilde{x}\in K$.
Since the operator $A_0$ is left-continuous, it follows from the
monotonicity of the sequence $\{x_n\}_{n=0}^{\infty}$ that
$
A\tilde{x}=\tilde{x},
$
and
\begin{equation}\label{Khachatryan37}
A^{n_0-1}v_0-\gamma_0 x^* \ge \tilde{x} \ge A^{n_0-1}v_0-x^*.
\end{equation}

Let us now note that $\tilde{x}\neq \theta$ and
$\tilde{x}\neq A^{n_0-1}v_0$.
Indeed, since $Ax^*=x^*,$ $x^*\le A^{n_0-1}v_0,$
$A^{n_0-1}v_0\ge A^{n_0}v_0$ and
$A^{n_0}v_0\neq A^{n_0-1}v_0$,
it follows from~\eqref{Khachatryan37} that $\tilde{x}\neq \theta$.
On the other hand, since $x^*\in K$, $x^*\neq \theta$,
from~\eqref{Khachatryan37} and the inclusion $\gamma_0\in(0,1),$
we also obtain that
$\tilde{x}\neq A^{n_0-1}v_0$.
Thus, the theorem is proved.
\end{proof}

\medskip
\begin{remark}
Generally speaking, the uniqueness
of a nontrivial fixed point of the operator $A_0$
does not hold: for example, for critical operators $A,$
it follows from~\eqref{Khachatryan34} that
$
A_0\!\left(A^{n_0-1}v_0\right)=A^{n_0-1}v_0.
$
\end{remark}
The following theorem also holds.

\medskip
\begin{theorem}\label{thm5}
Let the monotone and strongly concave operator
$A:K\to K$ have a fixed point $x^*\in K$, $x^*\neq \theta$,
and let the cone $K$ be normal.
Further, let $A_0:K\to K$ be a monotone and critical operator.
Then, if there exists a number $C_0>0$ such that
$
A_0u \le C_0 Au,\ u\in K
$
and
$
A_0(tu)\ge tA_0u,\ t\in[0,1],\ u\in \langle x^*, r_*x^* \rangle,
$
(where $r_*>C_0+1$ is the unique solution of the characteristic
equation
$
r\varphi\!\left(\frac{1}{r}\right)=C_0+1
$),
then
\begin{enumerate}
\item[1)] the operator $A+A_0$ has a nontrivial fixed point
$\tilde{x}$ in the cone $K$;

\item[2)] there exist numbers $C_\sharp>0$ and $k_\sharp\in(0,1)$ such that
$$
\|y_n-\tilde{x}\|\le C_\sharp k_\sharp^{\,n},  n=1,2,\dots,
$$
where the sequence $\{y_n\}_{n=0}^{\infty}\subset K$ is defined
by the recurrence relations
\begin{equation}\label{Khachatryan38}
y_{n+1}=(A+A_0)y_n,\qquad y_0=x^*,\quad n=0,1,\dots;
\end{equation}

\item[3)] there exist numbers $C_*>0$ and $k_*\in(0,1)$ such that
$$
\|A^n\tilde{x}- x^*\|\le C_* k_*^{\,n},  n=1,2,\dots .
$$
\end{enumerate}
\end{theorem}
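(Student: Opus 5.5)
The plan is to apply Theorem~\ref{thm2} (or rather its proof) to the operator $B:=A+A_0$ with $n_0=1$ and $v_0=x^*$. Since $A$ and $A_0$ are monotone, $B$ is monotone and maps $K$ into $K$. First we check condition~\eqref{Khachatryan27} for $B$. Because $A_0$ maps $K$ into $K$,
\[
Bx^*=x^*+A_0x^*\ge x^*.
\]
From $A_0u\le C_0Au$ we also get
\[
Bx^*\le x^*+C_0Ax^*=(C_0+1)x^*.
\]
So \eqref{Khachatryan27} holds with $r_0=C_0+1>1$, and the characteristic equation~\eqref{Khachatryan26} becomes $r\varphi(1/r)=C_0+1$, whose unique root is $r_*=\delta$.

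Next we need strong concavity of $B$ on $\langle x^*,r_*x^*\rangle$ in the sense actually used in the proof of Theorem~\ref{thm2}. The property needed there is $B(tu)\ge\varphi(t)Bu$ for $t\in[0,1]$. For the $A$-part this is strong concavity of $A$. For the $A_0$-part, the hypothesis gives $A_0(tu)\ge tA_0u\ge\varphi(t)A_0u$ for $u\in\langle x^*,r_*x^*\rangle$, since the concave function $\varphi$ with $\varphi(0)=0$, $\varphi(1)=1$ satisfies $\varphi(t)\ge t$. Hence $B(tu)\ge\varphi(t)Bu$ on this segment. The proof of Theorem~\ref{thm2} runs the increasing iteration $y_{n+1}=By_n$, $y_0=x^*$, and gives:
\begin{itemize}
\item the two-sided bounds $x^*\le y_n\le r_*x^*$, proved by the same induction as \eqref{Khachatryan31} using $r_*\varphi(1/r_*)=C_0+1$;
\item the ratio bounds $y_n\le y_{n+1}\le \bigl(1/\varphi^{[n]}(1/r_0)\bigr)\,y_n$, by iterating strong concavity as in \eqref{Khachatryan29};
\item geometric decay of $\|y_{n+1}-y_n\|$ from normality and estimate (3.6) of \cite{khach9}.
\end{itemize}
These yield a limit $\tilde x\ge x^*\ne\theta$ with geometric rate, which is assertion 2). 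Continuity is not assumed, so $B\tilde x=\tilde x$ is obtained as in Theorem~\ref{thm1}. Monotonicity gives $B\tilde x\ge By_n=y_{n+1}$, hence $B\tilde x\ge\tilde x$. For the reverse inequality we use $\tilde x\le\bigl(1/\varphi^{[n]}(\cdot)\bigr)y_n$-type estimates and concavity to get $B\tilde x\le(1+\varepsilon_n)\tilde x$ with $\varepsilon_n\to0$. This proves 1).

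For 3) the approach is a sandwich argument on the iterates $A^n\tilde x$. Since $\tilde x\ge x^*$, monotonicity gives $A^n\tilde x\ge A^nx^*=x^*$. Since $\tilde x\le r_*x^*$, write $\tilde x\le\sigma^{-1}x^*$ with $\sigma=1/r_*$. Strong concavity of $A$ at $u=\tilde x$ gives $A(\sigma\tilde x)\ge\varphi(\sigma)A\tilde x$, so
\[
A\tilde x\le \varphi(\sigma)^{-1}A(\sigma\tilde x)\le\varphi(\sigma)^{-1}Ax^*=\varphi(\sigma)^{-1}x^*.
\]
Iterating this gives
\[
x^*\le A^n\tilde x\le\bigl(1/\varphi^{[n]}(\sigma)\bigr)x^*.
\]
Then
\[
\theta\le A^n\tilde x-x^*\le\bigl(1/\varphi^{[n]}(\sigma)-1\bigr)x^*,
\]
and normality together with estimate (3.6) of \cite{khach9} gives $\|A^n\tilde x-x^*\|\le C_*k_*^n$ with $k_*=(1-\varphi(\sigma))/(1-\sigma)$.

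The main obstacle is the fixed-point identification $B\tilde x\le\tilde x$ without continuity. The approach is to first prove the ratio bound $y_{n+1}\le(1/\varphi^{[n]}(1/r_0))\,y_n$. Then, since $y_n\le\tilde x$, monotonicity gives
\[
B\tilde x\le B\bigl(\lambda_n^{-1}y_n\bigr),\qquad \lambda_n:=\varphi^{[n]}(1/r_0)\to1.
\]
Strong concavity bounds this by $\varphi(\lambda_n)^{-1}By_n=\varphi(\lambda_n)^{-1}y_{n+1}\le\varphi(\lambda_n)^{-1}\tilde x$, and letting $n\to\infty$ in the closed cone gives $B\tilde x\le\tilde x$. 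One point to check is that $\lambda_n^{-1}y_n$ stays in the segment where concavity of $A_0$ holds. If it does not, we apply the concavity inequality at $u=\tilde x$ (which lies in $\langle x^*,r_*x^*\rangle$) with $t=\lambda_n$ instead.
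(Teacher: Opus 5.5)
There is a genuine gap in how you transfer strong concavity to $B=A+A_0$. You write $A_0(tu)\ge tA_0u\ge\varphi(t)A_0u$ ``since $\varphi(t)\ge t$''. But $A_0u\ge\theta$, so $\varphi(t)\ge t$ gives $tA_0u\le\varphi(t)A_0u$. The second inequality is therefore reversed, and $B(tu)\ge\varphi(t)Bu$ is false in general.

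For instance, in $K=\mathbb{R}_+$ take $Au=\sqrt u$ (so $\varphi(\sigma)=\sqrt\sigma$ and $x^*=1$) and $A_0u=C_0\min\{u,\sqrt u\}$. All hypotheses hold, yet $B(t\cdot 1)-\sqrt t\,B(1)=C_0(t-\sqrt t)<0$ for $t\in(0,1)$.

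Your argument never uses $A_0u\le C_0Au$ for concavity, and that is exactly the hypothesis that repairs this step. The paper shows that $B$ is strongly concave on $\langle x^*,r_*x^*\rangle$ with the weaker function $\Phi(\sigma)=\frac{C_0\sigma+\varphi(\sigma)}{1+C_0}$, because
\[
B(\sigma u)-\Phi(\sigma)Bu\ge\varphi(\sigma)Au+\sigma A_0u-\Phi(\sigma)Bu=\frac{\varphi(\sigma)-\sigma}{1+C_0}\bigl(C_0Au-A_0u\bigr)\ge\theta .
\]
$\Phi$ keeps all the properties required of $\varphi$, including $\Phi'(+0)=+\infty$.

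With $\Phi$ you can no longer just run Theorem~\ref{thm2} for $B$. Its characteristic root now solves $\delta\Phi(1/\delta)=C_0+1$, i.e.\ $\delta\varphi(1/\delta)=1+C_0+C_0^2$, which is strictly larger than $r_*$. Concavity of $A_0$ is only assumed on $\langle x^*,r_*x^*\rangle$, so the induction ``as in \eqref{Khachatryan31}'' using $B$'s own concavity does not close at $r_*$.

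The paper gets $y_n\le r_*x^*$ without using $B$'s concavity:
$y_{n+1}\le B(r_*x^*)\le(C_0+1)A(r_*x^*)\le\frac{C_0+1}{\varphi(1/r_*)}Ax^*=r_*x^*$.
This uses only $A_0\le C_0A$ and the strong concavity of $A$ on all of $K$. Only afterwards is the $\Phi$-concavity of $B$ on $\langle x^*,r_*x^*\rangle$ used. Your ratio bounds and convergence rate must then be written with $\Phi^{[n]}$ in place of $\varphi^{[n]}$.

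There is also a smaller gap in your fixed-point identification. The bound $\tilde x\le\lambda_n^{-1}y_n$ does not follow from the one-step bound $y_{n+1}\le\lambda_n^{-1}y_n$, and $y_n\le\tilde x$ points the other way. You need one of two fixes:
\begin{itemize}
\item use the tail product $\prod_{j\ge n}\lambda_j^{-1}\to1$; or
\item argue as in Theorem~\ref{thm1}: $B\tilde x\ge\tilde x$ makes $B^n\tilde x$ increasing, and $B(r_*x^*)\le r_*x^*$ keeps it in $\langle x^*,r_*x^*\rangle$. Iterating $\Phi$-concavity from $\tilde x\le r_*y_0$ then gives $B\tilde x\le B^n\tilde x\le\bigl(1/\Phi^{[n]}(1/r_*)\bigr)y_n\le\bigl(1/\Phi^{[n]}(1/r_*)\bigr)\tilde x$.
\end{itemize}

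Your part 3) is correct and is the same as the paper's argument.
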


\begin{proof}
\medskip
First, let us verify that the operator $A+A_0$ is strongly concave
on the cone
$
K_1:=\langle x^*,\ r_* x^*\rangle \subset K .
$
For this purpose we define the mapping
\begin{equation}\label{Khachatryan39}
\Phi(\sigma):=\frac{C_0}{1+C_0}\,\sigma
+\frac{1}{1+C_0}\,\varphi(\sigma),
\qquad \sigma\in[0,1],
\end{equation}
where $\varphi(\sigma)$ is the concave mapping
corresponding to the operator $A$.

We show that
\begin{equation}\label{Khachatryan40}
(A+A_0)(\sigma u)\ge \Phi(\sigma)(A+A_0)u,
\qquad
u\in\langle x^*,r_* x^*\rangle,\ \sigma\in[0,1].
\end{equation}

Indeed, taking into account the strong concavity of the operator $A$
and the obvious inequality
$
\varphi(\sigma)\ge \sigma,\ \sigma\in[0,1],
$
and using \eqref{Khachatryan39}, we obtain
$$
(A+A_0)(\sigma u)-\Phi(\sigma)(A+A_0)u
=A(\sigma u)+A_0(\sigma u)
-\Phi(\sigma)\bigl(Au+A_0u\bigr)\ge
$$
$$
\ge \varphi(\sigma)Au + \sigma A_0u
- \Phi(\sigma)Au - \Phi(\sigma)A_0u
=$$$$=
\frac{\varphi(\sigma)-\sigma}{1+C_0}
\bigl(C_0Au - A_0u\bigr)
\;\ge\; \theta,
u \in \langle x^*, r_* x^* \rangle,
\sigma \in [0,1].
$$
We used
$A_0u \le C_0Au$, $u \in K$ to obtain the last inequality.

Now consider the successive approximations \eqref{Khachatryan38}.
By induction on $n$ it is easy to verify that
\begin{equation}\label{Khachatryan41}
y_{n+1} \ge y_n, \qquad n = 0,1,\dots .
\end{equation}

Let us prove that
\begin{equation}\label{Khachatryan42}
y_n \le r_* x^*, \qquad n = 0,1,\dots .
\end{equation}

For $n=0$ inequality \eqref{Khachatryan42} is obvious.
Suppose that \eqref{Khachatryan42} holds
for some natural $n$.
Then, using the condition $A_0u \le C_0Au$, $u \in K$ and
the strong concavity of the operator $A,$ as well as the monotonicity
of the operator $A+A_0,$ from \eqref{Khachatryan38} we obtain
$$
y_{n+1}
\le (A + A_0)(r_* x^*)
\le (C_0 + 1) A(r_* x^*)
\le \frac{C_0 + 1}{\varphi(1/r_*)} A x^*
= r_* x^* .
$$

Thus, from \eqref{Khachatryan41}, \eqref{Khachatryan42} we obtain the following full chain of inequalities:
\begin{equation*}
y_0 \le y_1  \le r_*y_0.
\end{equation*}
Let us now note that the mapping $\Phi$,
defined by formula \eqref{Khachatryan39},
possesses the same properties
as the mapping $\varphi$
(see the definition of strong concavity for the properties of $\varphi$).

Further, by repeating the arguments
analogous to the proof of Theorem~\ref{thm1},
and using the monotonicity and strong concavity
of the operator $A + A_0$, we arrive at statements 1) and 2)
of the formulated theorem.

In order to complete the proof, we are left to verify statement 3) of the theorem.
First note that from \eqref{Khachatryan41}, \eqref{Khachatryan42} it immediately follows that
\begin{equation}\label{Khachatryan43}
x^{*} \le y_n \le r_*\, x^{*}, \qquad n = 0,1,\dots .
\end{equation}

Taking into account the closedness of the cone $K$, from \eqref{Khachatryan43}
we conclude the following two-sided inequality:
\begin{equation}\label{Khachatryan44}
x^{*} \le \tilde{x} \le r_*\, x^{*}.
\end{equation}

Using the monotonicity and strong concavity of the operator $A,$
as well as the relation
$
A x^{*} = x^{*},
$
from \eqref{Khachatryan44} we obtain the estimates
\begin{equation}\label{Khachatryan45}
x^{*} \le A^{n} \tilde{x}
\le \frac{1}{\underbrace{\varphi\bigl(\varphi(\dots \varphi(1/r_*))\bigr)}_n}\, x^{*},
\qquad n = 1,2,\dots .
\end{equation}

Further,  using inequality (3.6) from paper~\cite{khach9}
and taking into account \eqref{Khachatryan45}, by virtue of the normality
of the cone $K,$ we conclude that there exists a constant $C_1 > 0$ such that
$$
\|A^{n} \tilde{x} - x^{*}\|
\le C_1 \|x^{*}\|
\left(
\frac{1}{\underbrace{\varphi\bigl(\varphi(\dots \varphi(1/r_*))\bigr)}_n} - 1
\right)\le C_1\, r_* \|x^{*}\|\, k_*^{\,n},
\ n = 1,2,\dots ,
$$
where
$
k_* :=
\frac{1 - \varphi(1/r_*)}{1 - 1/r_*}
\in (0,1).
$

Thus, the theorem is proved.
\end{proof}
\medskip
\begin{remark}
In the course of proving Theorem~\ref{thm5} we also obtained
the a priori two-sided estimate~\eqref{Khachatryan44}.
\end{remark}

The following theorem acts as a substantial generalization of Theorem~\ref{thm3}
in the case where the operator $A$ is strongly concave
on the whole cone $K$:

\medskip
\begin{theorem}\label{thm6}
Let $A\colon K \to K$ be a continuous, monotone and strongly
concave operator in a normal cone $K$.
Then, if there exist numbers
$r_1 \in (0,1)$, $r_2 > 1$, $n_0, m_0 \in \mathbb{N}$,
$n_0 \ge m_0$ and an element $v_0 \in K$, $v_0 \ne \theta$ such that
\begin{equation}\label{Khachatryan46}
r_1 A^{\,n_0-m_0} v_0
\le A^{\,n_0} v_0
\le r_2 A^{\,n_0-m_0} v_0,
\end{equation}
then there exist nonzero elements
$p_0, p_1, \dots, p_{m_0-1} \in K$ such that
$
A^{\,m_0} p_j = p_j,
\ j = 0,1,\dots,m_0-1.
$
Moreover, if additionally there exist numbers
$d_1 \in (0,1)$, $d_2 > 1$ and indices
$i_0, j_0 \in \{0,1,\dots,m_0-1\}$,
$i_0 \ne j_0$, such that $m_0$ and $|i_0-j_0|$ are relatively prime, and
\begin{equation}\label{Khachatryan47}
d_1 A^{\,m_0+i_0} v_0
\le A^{\,m_0+j_0} v_0
\le d_2 A^{\,m_0+i_0} v_0,
\end{equation}
then
$
p_0 = p_1 = \dots = p_{m_0-1} =: p^{*}
$
and
$
A p^{*} = p^{*}.
$
\end{theorem}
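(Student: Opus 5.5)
The plan is to apply Theorem~\ref{thm3} to the operator $B:=A^{m_0}$ instead of $A$. First, $B$ is monotone and continuous, being a composition of monotone continuous maps. Next we show $B$ is strongly concave on all of $K$. Iterating \eqref{Khachatryan3} with monotonicity gives
\[
A^{m_0}(\sigma u)\ge \psi(\sigma)A^{m_0}u,\qquad \psi:=\underbrace{\varphi\circ\cdots\circ\varphi}_{m_0}.
\]
The function $\psi$ is continuous, increasing and concave, since a composition of increasing concave functions is concave. It also satisfies $\psi(0)=0$ and $\psi(1)=1$. Finally $\psi'(+0)=+\infty$ by the chain rule, because $\varphi(\tau)/\tau\to\infty$ and $\varphi\ge \mathrm{id}$.

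Next set $w_0:=A^{n_0-m_0}v_0$, so that \eqref{Khachatryan46} reads $r_1w_0\le Bw_0\le r_2 w_0$, which is \eqref{Khachatryan28} for $B$ with index $1$. One point needs checking: if $w_0=\theta$, then $Bw_0\le\theta$ forces nothing useful. However, $v_0\ne\theta$ and $r_1 w_0\le A^{n_0}v_0$ only help if $w_0\neq\theta$, so we must assume $w_0\ne\theta$. This is implicit, since otherwise \eqref{Khachatryan46} is vacuous. Theorem~\ref{thm3} then yields a nonzero $p_0$ with $Bp_0=p_0$ and $B^n w_0\to p_0$ geometrically.

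Now define $p_j:=A^jp_0$ for $j=0,\dots,m_0-1$. Then $A^{m_0}p_j=A^jA^{m_0}p_0=p_j$. Each $p_j$ is nonzero: if $p_j=\theta$, then $p_0=A^{m_0-j}p_j=A^{m_0-j}\theta$. A strongly concave operator with $\varphi$ concave satisfies $A\theta=A(0\cdot u)\ge 0$, which gives no contradiction directly. Instead, use the lower bound from the proof of Theorem~\ref{thm3}: $B^nw_0\ge\tau_1 w_0$, so $p_0\ge \tau_1 w_0$. Then by strong concavity $A^jp_0\ge \psi_j(\tau_1)A^jw_0$. In turn $A^jw_0\ne\theta$, because $A^{m_0}w_0\ge r_1w_0\ne\theta$ and normality/monotonicity propagate nonvanishing. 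Equivalently, $p_j=\lim A^j B^n w_0$ by continuity, so the $p_j$ are the limits of the subsequences $A^{n_0-m_0+j+nm_0}v_0$.

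For the second part, the key idea is that \eqref{Khachatryan47} transfers to the limits. Apply $B^{n}A^{n_0-2m_0}$ to \eqref{Khachatryan47} (after adjusting indices so that the terms become $A^{i_0}B^{n}w_0$ and $A^{j_0}B^nw_0$). This uses the bound $B(\lambda u)\ge\psi(\lambda)Bu$ and its reverse form $B(\lambda u)\le \lambda\psi(1/\lambda)\,Bu$ for $\lambda>1$. Passing to the limit gives $d_1' p_{i_0}\le p_{j_0}\le d_2' p_{i_0}$ with $d_1'>0$. Now iterate this comparison under $B$: since $p_{i_0},p_{j_0}$ are $B$-fixed,
\[
\psi^{(n)}(d_1)p_{i_0}\le p_{j_0}\le \bigl(\psi^{(n)}(1/d_2)\bigr)^{-1}p_{i_0}.
\]
The iterates $\psi^{(n)}(d)\to1$ because $\psi(\tau)>\tau$ on $(0,1)$. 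Closedness of $K$ then gives $p_{i_0}=p_{j_0}$. Hence $A^{s}p_{i_0}=p_{i_0}$ with $s=j_0-i_0$, and together with $A^{m_0}p_{i_0}=p_{i_0}$ and $\gcd(s,m_0)=1$, Bezout gives $Ap_{i_0}=p_{i_0}$. Consequently all $p_j=A^jp_0$ coincide with this common fixed point $p^*$.

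The main obstacle is the index bookkeeping needed to transport \eqref{Khachatryan47} into a comparison between $p_{i_0}$ and $p_{j_0}$. A secondary difficulty is justifying the reverse concavity bound for $\lambda>1$, which comes from applying \eqref{Khachatryan3} with $\sigma=1/\lambda$ to $\lambda u$.
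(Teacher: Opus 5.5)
Your argument is correct in substance, but it is organized differently from the paper's.

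\textbf{Comparison of routes.} The paper never passes to $A^{m_0}$. It runs the single orbit $x_n=A^nA^{n_0-m_0}v_0$ and splits it into the residue classes $p^j_k=x_{m_0k+j}$. It proves the bounds \eqref{Khachatryan52} and \eqref{Khachatryan55} for each class by its own induction and derives the Cauchy estimates \eqref{Khachatryan56}. Only then does it use continuity of $A$ to get the cyclic relations \eqref{Khachatryan59}.

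You instead check once that $B=A^{m_0}$ is monotone, continuous and strongly concave on $K$ with $\psi=\varphi^{\circ m_0}$. Your check of $\psi'(+0)=+\infty$ is fine: $\psi(\tau)/\tau=\frac{\varphi(s)}{s}\cdot\frac{s}{\tau}$ with $s=\varphi^{\circ(m_0-1)}(\tau)\to0$ and $s\ge\tau$. Then \eqref{Khachatryan46} is literally \eqref{Khachatryan28} for $B$ at $w_0$ with index $1$, and Theorem~\ref{thm3} delivers $p_0$ together with a geometric rate. The $p_j:=A^jp_0$ are $A^{m_0}$-periodic by construction and, by continuity, coincide with the paper's limits.

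Your route is shorter and more modular. The paper's route avoids re-verifying that the iterated function is admissible, and it produces the explicit bounds \eqref{Khachatryan58} for every $p_j$, which it uses to get \eqref{Khachatryan61}.

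In the second part your transfer is simpler than you make it. Apply $A^{k_n}$ with $k_n=(n-2)m_0+n_0\ge0$ to \eqref{Khachatryan47}. This gives the constants $\varphi^{\circ k_n}(d_1)\ge d_1$ and $1/\varphi^{\circ k_n}(1/d_2)\le d_2$, which both tend to $1$. So $p_{i_0}=p_{j_0}$ follows directly on passing to the limit, without the extra iteration under $B$. Note that $k_n$ need not be a multiple of $m_0$, so what you use is the bound for $A^{k}$ (proved by the same induction), not the bound for $B$.

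\textbf{Three small corrections.}

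First, the reverse bound is $B(\lambda u)\le \psi(1/\lambda)^{-1}Bu$ for $\lambda>1$. This is exactly what applying \eqref{Khachatryan3} with $\sigma=1/\lambda$ to $\lambda u$ gives. The form $\lambda\psi(1/\lambda)\,Bu$ you wrote is false in general (take $Bu=u^{\alpha}$ on $\mathbb{R}_+$ with $\alpha>1/2$). Nothing breaks, though, since you only need some finite $d_2'$.

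Second, the phrase ``normality/monotonicity propagate nonvanishing'' is not an argument. Here is a clean one, which also avoids the internal bound \eqref{Khachatryan30}. Suppose $A^jp_0=\theta$ for some $1\le j\le m_0-1$. Then $A^j\theta\le A^jp_0=\theta$ gives $A^j\theta=\theta$. Hence $\theta\le A\theta\le\cdots\le A^j\theta=\theta$ gives $A\theta=\theta$, and then $p_0=A^{m_0-j}(A^jp_0)=\theta$, a contradiction.

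Third, you are right that $w_0=A^{n_0-m_0}v_0\ne\theta$ has to be assumed. For $A\equiv\theta$, $n_0=2$, $m_0=1$, all hypotheses hold and there is no nonzero fixed point. The paper's proof uses the same assumption silently when it asserts $A^nv_0\ne\theta$.
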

\begin{proof}
Define the successive approximations:
\begin{equation}\label{Khachatryan48}
x_{n+1} = A x_n,
\qquad
x_0 = A^{\,n_0-m_0} v_0,
\qquad
n = 0,1,\dots .
\end{equation}

Taking into account condition \eqref{Khachatryan46}, from \eqref{Khachatryan48} we obtain
\begin{equation}\label{Khachatryan49}
r_1 x_0 \le x_{m_0} \le r_2 x_0 .
\end{equation}

Using the monotonicity and strong concavity of the operator $A$
and performing steps analogous to those used in deriving
inequalities~\eqref{Khachatryan29}, as well as using  \eqref{Khachatryan48}, from \eqref{Khachatryan49}
we arrive at the following two-sided inequality:
\begin{align}\label{Khachatryan50}&
\underbrace{\varphi\bigl(\varphi(\dots \varphi(r_1))\bigr)}_{m-m_0}\, x_{m-m_0}
\le x_m
\le
\frac{1}{\underbrace{\varphi\bigl(\varphi(\dots \varphi(1/r_2))\bigr)}_{m-m_0}}\, x_{m-m_0},\\&\nonumber
 m = m_0, m_0+1,\dots.
\end{align}

Consider the following subsequences of the sequence
$\{x_n\}_{n=0}^{\infty}$:
\begin{align}\label{Khachatryan51}
& p_k^{\,0} := x_{m_0 k},
\,
p_k^{\,1} := x_{m_0 k + 1},
\,
 \dots
 ,\,
p_k^{\,m_0-1} := x_{m_0 k + m_0 - 1},
\\&\nonumber
\quad k = 1,2,\dots .
\end{align}

Then from (50) taking into account (51) we obtain the following inequalities:
\begin{align}\label{Khachatryan52}&
\underbrace{\varphi\bigl(\varphi(\dots \varphi(r_1))\bigr)}_{m_0(k-1)+j} p_{k-1}^{\,j}
\le p_k^{\,j}
\le
\frac{1}{\underbrace{\varphi\bigl(\varphi(\dots \varphi(1/r_2))\bigr)}_{m_0(k-1)+j}} p_{k-1}^{\,j},\\&
\nonumber
 k = 2,3,\dots, j=0,1,\dots m_0-1 .
\end{align}

Consider the following characteristic equations:
\begin{equation}\label{Khachatryan53}
\varphi(\tau) = \frac{\tau}{r_1},\ \  \tau \in (0,1),\ \
r\,\varphi\!\left(\frac{1}{r}\right) = r_2,
\ \ r \in (1,+\infty).
\end{equation}

Just as in the proofs of Theorems~\ref{thm1} and~\ref{thm3},  here we also have
$\tau^{*} \in (0,r_1)$ and $ r^{*} > r_2$
such that
$
\varphi(\tau^{*}) = \frac{\tau^{*}}{r_1},
\
 r^{*}\,\varphi\!\left(\frac{1}{ r^{*}}\right) = r_2.
$

On the other hand, from \eqref{Khachatryan51} and \eqref{Khachatryan48}
it immediately follows that:
\begin{align}\label{Khachatryan54}&
p_{k+1}^{\,0} = A p_k^{\,m_0-1}, \\&\nonumber
p_{k+1}^{\,1} = A p_{k+1}^{\,0}, \\&\nonumber \dots, \\&\nonumber
p_{k+1}^{\,m_0-1} = A p_{k+1}^{\,m_0-2},
\qquad k = 1,2,\dots .
\end{align}

By induction on $k,$ we prove the validity of the following set
of two-sided inequalities:
\begin{align}\label{Khachatryan55}&
r^{*} p_{1}^{\,0} \ge p_k^{\,0} \ge \tau^{*} p_{1}^{\,0}, \\&\nonumber
r^{*} p_{1}^{\,1} \ge p_k^{\,1} \ge \tau^{*} p_{1}^{\,1},\\&\nonumber
 \dots , \\&\nonumber r^{*} p_{1}^{\,m_0-1} \ge p_k^{\,m_0-1}
\ge \tau^{*} p_{1}^{\,m_0-1},  k = 1,2,\dots .
\end{align}

For $k = 1$ the above set of inequalities obviously holds. Indeed,
$ r^{*} > 1,$ $\tau^{*} \in (0,1),$ and
$p_1^{\,j} = x_{m_0 + j} \in K,$ $j = 0,1,\dots,m_0-1.$

Assume that inequalities \eqref{Khachatryan55} hold
for some natural $k$.
Then, taking into account the monotonicity and strong concavity
of the operator $A$, by virtue of
\eqref{Khachatryan54}, \eqref{Khachatryan52}, \eqref{Khachatryan50},
\eqref{Khachatryan51}, and using the induction hypothesis, we obtain
$$
p_{k+1}^{\,0}
= A p_k^{\,m_0-1}
\ge A\!\left(\tau^{*} p_{1}^{\,m_0-1}\right)
\ge \varphi(\tau^{*})\, A p_{1}^{\,m_0-1}
= \varphi(\tau^{*})\, p_2^{\,0}\ge $$$$\ge
\varphi(\tau^*) \underbrace{\varphi(\varphi\ldots\varphi(r_1))}_{m_0}p_1^0\ge r_1\varphi(\tau^*)p_1^0= r^{*} p_1^{\,0},
$$
$$
p_{k+1}^{\,1}
= A p_{k+1}^{\,0}
\ge A\!\left(\tau^{*} p_{1}^{\,0}\right)
\ge \varphi(\tau^{*})\, A p_{1}^{\,0}
=\varphi(\tau^*)Ax_{m_0}=\varphi(\tau^*)x_{m_0+1}=$$$$=\varphi(r^{*})\, p_1^{\,1}
\ge \tau^{*} p_1^{\,1},
$$
$$
\dots\dots\dots\dots\dots\dots\dots\dots\dots\dots\dots\dots\dots\dots\dots\dots\dots\dots\dots\dots\dots
$$
$$
p_{k+1}^{\,m_0-1}
= A p_{k+1}^{\,m_0-2}
\ge A\!\left(\tau^{*} p_{1}^{\,m_0-2}\right)
\ge \varphi(\tau^{*})\, A p_{1}^{\,m_0-2}
= \varphi(\tau^{*})x_{\,2m_0-1}=$$$$=\varphi(\tau^*)p_1^{m_0-1}
\ge \tau^{*} p_1^{\,m_0-1},
$$
$$
p_{k+1}^{\,0}
= A p_k^{\,m_0-1}
\le A\!\left(r^{*} p_{1}^{\,m_0-1}\right)
\le \frac{1}{\varphi\!\left(\frac{1}{r^{*}}\right)}\, A p_{1}^{\,m_0-1}
= \frac{1}{\varphi\!\left(\frac{1}{r^{*}}\right)}\, p_2^{\,0}\le $$
$$ \le  \frac{1}{\varphi\!\left(\frac{1}{r^{*}}\right)} \frac{1}{\underbrace{\varphi(\varphi\ldots\varphi\!\left(\frac{1}{r_2}\right))}_{m_0}}p_1^0\le \frac{r_2}{\varphi\!\left(\frac{1}{r^{*}}\right)}p_1^0=r^*p_1^0,$$
$$
p_{k+1}^{\,1}
= A p_{k+1}^{\,0}
\le A\!\left(r^{*} p_{1}^{\,0}\right)
\le \frac{1}{\varphi\!\left(\frac{1}{r^{*}}\right)}\, A p_{1}^{\,0}
= \frac{1}{\varphi\!\left(\frac{1}{r^{*}}\right)}\,Ax_{m_0}
=$$$$=  \frac{1}{\varphi\!\left(\frac{1}{r^{*}}\right)}\,x_{m_0+1}=
\frac{1}{\varphi\!\left(\frac{1}{r^{*}}\right)} p_1^{\,1}
\le r^{*} p_1^{\,1}.
$$
$$
\dots\dots\dots\dots\dots\dots\dots\dots\dots\dots\dots\dots\dots\dots\dots\dots\dots\dots\dots\dots\dots
$$
$$
p_{k+1}^{\,m_0-1}
= A p_{k+1}^{\,m_0-2}
\le A\!\left(r^{*} p_{1}^{\,m_0-2}\right)
\le \frac{1}{\varphi\!\left(\frac{1}{r^{*}}\right)}\, A p_{1}^{\,m_0-2}
= \frac{1}{\varphi\!\left(\frac{1}{r^{*}}\right)}\,x_{2m_0-1}= $$$$= \frac{1}{\varphi\!\left(\frac{1}{r^{*}}\right)}\, p_1^{\,m_0-1}
\le r^{*} p_1^{\,m_0-1}.
$$
Further, performing arguments analogous to those in the proof
of Theorem~\ref{thm3}, from \eqref{Khachatryan52} and \eqref{Khachatryan55}
we obtain that there exist positive constants
$C_1,\dots,C_{m_0} > 0$ and $q_1,\dots,q_{m_0} \in (0,1)$ such that
\begin{align}\label{Khachatryan56}&
\| p_k^{\,0} - p_{k+1}^{\,0} \|
\le C_1\, q_1^{\,m_0(k-1)},
\\&\nonumber
\| p_k^{\,1} - p_{k+1}^{\,1} \|
\le C_2\, q_2^{\,m_0(k-1)+1},\\
&\nonumber \ldots,
\\&\nonumber
\| p_k^{\,m_0-1} - p_{k+1}^{\,m_0-1} \|
\le C_{m_0}\, q_{m_0}^{\,m_0(k-1)+m_0-1},
\quad k = 1,2,\dots,
\end{align}
Moreover, the following limits exist:
\begin{equation}\label{Khachatryan57}
\lim_{k\to\infty} p_k^{\,0} = :p_0,\quad
\lim_{k\to\infty} p_k^{\,1} = :p_1,\quad \dots \quad,
\lim_{k\to\infty} p_k^{\,m_0-1} = :p_{m_0-1}.
\end{equation}

These limits satisfy the following inequalities:
\begin{equation}\label{Khachatryan58}
r^*p_1^0\ge p_0\ge \tau^*p_1^0, \ r^*p_1^1\ge p_1\ge \tau^*p_1^1, \ldots, r^*p_1^{m_0-1}\ge p_{m_0-1}\ge \tau^*p_1^{m_0-1}.
\end{equation}

From \eqref{Khachatryan57} and \eqref{Khachatryan54}, taking into account
the continuity of the operator $A$, it immediately follows that
\begin{equation}\label{Khachatryan59}
p_0 = A p_{m_0-1}, \quad
p_1 = A p_0, \quad
p_2 = A p_1, \quad \dots \quad,
p_{m_0-1} = A p_{m_0-2}.
\end{equation}

From \eqref{Khachatryan59} it follows immediately that
$
A^{m_0} p_j = p_j, \ j = 0,1,\dots,m_0-1 .
$

From the closedness of the cone $K$, by virtue of
\eqref{Khachatryan55}, \eqref{Khachatryan48}, \eqref{Khachatryan51},
it follows that
$
p_j \in K,\quad p_j \ne \theta,
\ j = 0,1,\dots,m_0-1,
$
since $v_0 \in K$, $v_0 \ne \theta$ and
$
A^n v_0 \in K,\ A^n v_0 \ne \theta,
\ n = 1,2,\dots .
$

Suppose now that there exist numbers
$d_1 \in (0,1)$, $d_2 \in (1,+\infty)$
and indices
$i_0,j_0 \in \{0,1,\dots,m_0-1\}$,
$i_0 \ne j_0$, with
$\gcd(m_0,|i_0-j_0|)=1$
such that condition~\eqref{Khachatryan47} holds.

Then, taking into account \eqref{Khachatryan48} and \eqref{Khachatryan51},
condition~\eqref{Khachatryan47} can be rewritten as
\begin{equation}\label{Khachatryan60}
d_1\, p^{i_0}_{\,1} \le p^{j_0}_{\,1} \le d_2\, p^{i_0}_{\,1}.
\end{equation}

From \eqref{Khachatryan58} and \eqref{Khachatryan60} it follows that
there exist numbers
$\ell_1 \in (0,1)$ and $\ell_2 > 1$ such that
\begin{equation}\label{Khachatryan61}
\ell_1\, p_{i_0} \le p_{j_0} \le \ell_2\, p_{i_0}.
\end{equation}

Again, using the monotonicity and strong concavity of the operator $A$,
and by virtue of equalities \eqref{Khachatryan59}, from \eqref{Khachatryan61}
we obtain the following chain of inequalities:
\begin{equation}\label{Khachatryan62}
\underbrace{\varphi\!\bigl(\varphi(\dots \varphi(\ell_1))\bigr)}_{m_0k}\, p_{i_0}
\le p_{j_0}
\le
\frac{1}{\underbrace{\varphi\!\bigl(\varphi(\dots \varphi\left(1/\ell_2\right))\bigr)}_{m_0k}}\, p_{i_0},\quad k=1,2,\ldots.
\end{equation}
Since
$
\lim\limits_{k\to\infty}
\underbrace{\varphi\!\bigl(\varphi(\dots \varphi(\ell_1))\bigr)}_{m_0k}
=
\lim\limits_{k\to\infty}
\underbrace{\varphi\!\bigl(\varphi(\dots \varphi(1/\ell_2))\bigr)}_{m_0k}
= 1,
$
it follows from \eqref{Khachatryan62} that
$p_{i_0} = p_{j_0}$.

Since $\gcd(m_0,|i_0-j_0|)=1$, by virtue of \eqref{Khachatryan59}
from equality $p_{i_0}=p_{j_0},$ it follows that
$
p_0 = p_1 = \dots = p_{m_0-1} =: p^{*}
\quad \text{and} \quad
A p^{*} = p^{*}.
$

Thus, the theorem is proved.
\end{proof}

\medskip
\begin{remark}
It should be noted that in Theorem~\ref{thm6} the condition \linebreak
$\gcd(m_0,|i_0-j_0|)=1$
is essential. Indeed, if the condition doesn't hold,
from \eqref{Khachatryan59} and equality $p_{i_0}=p_{j_0}$
it would not follow that
$
p_0 = p_1 = \dots = p_{m_0-1}.
$ If we consider, for example, the case
$m_0=6$, $i_0=3$, $j_0=1$, then
$\gcd(m_0,|i_0-j_0|)=2$,
and from equality $p_3=p_1$ and relations
$
p_0 = A p_5,\
p_1 = A p_0,\
p_2 = A p_1,\
p_3 = A p_2,\
p_4 = A p_3,\
p_5 = A p_4
$
only these equalities would follow:
$
p_0 = p_2 = p_4
$
and
$
p_1 = p_3 = p_5
$
(see Fig.~\ref{fig1}).
\begin{figure}[h]
\centering
\includegraphics[width=0.4\textwidth]{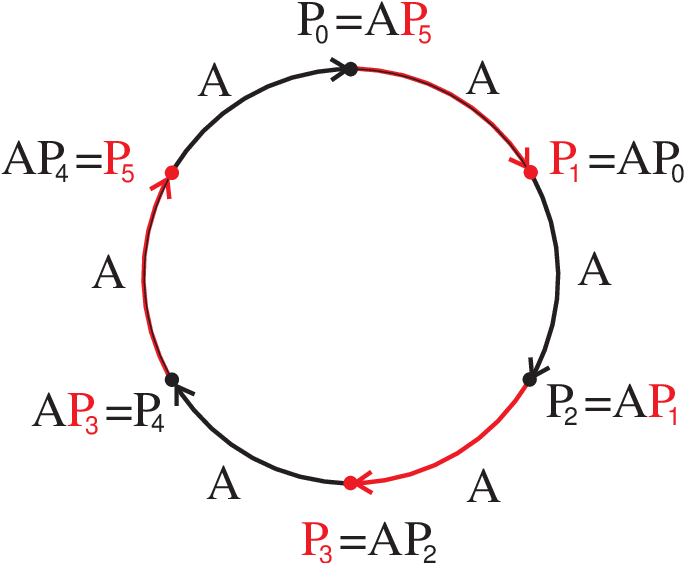}
\caption{Cyclic transfer of fixed points.}\label{fig1}
\end{figure}
\end{remark}
\newpage
\section{Uniqueness theorems for fixed points. Non-uniqueness of a fixed point for continuous non-concave and non-monotone operators}\label{sec4}

The following result holds.
\begin{theorem}\label{thm7}
Let $A : K \to K$ be a monotone and strongly concave operator
in the cone $K$.
Assume that there exists $x^* \in K$, $x^* \neq \theta$
such that
$
A x^* = x^* .
$
Then for any $r_1 \in (0,1)$ and $r_2 > 1$
in the conical segment
$\langle r_1 x^*, r_2 x^* \rangle$
the operator $A$  has no fixed points other than $x^*.$
\end{theorem}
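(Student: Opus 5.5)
Let $y\in\langle r_1x^*,r_2x^*\rangle$ satisfy $Ay=y$. We will show $y=x^*$ by iterating the two-sided comparison between $y$ and $x^*$ and using that the iterates of $\varphi$ tend to $1$. Neither normality of the cone nor continuity of $A$ will be used, only monotonicity, strong concavity and closedness of $K$. We understand strong concavity on the whole cone $K$, so \eqref{Khachatryan3} is available at both $x^*$ and $y$.

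\emph{Step 1 (lower bound).} Suppose $y\ge \sigma x^*$ with $\sigma\in(0,1]$. Since $y=Ay$, monotonicity and \eqref{Khachatryan3} applied at $u=x^*$ give
\[
y = Ay \ge A(\sigma x^*) \ge \varphi(\sigma)Ax^* = \varphi(\sigma)x^*.
\]
Starting from $\sigma=r_1$ and inducting, we get $y\ge \varphi_n(r_1)\,x^*$ for every $n$, where $\varphi_n$ denotes the $n$-fold composition of $\varphi$.

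\emph{Step 2 (upper bound).} From $y\le r_2x^*$ we have $x^*\ge r_2^{-1}y$. Applying \eqref{Khachatryan3} at $u=y$ gives
\[
x^* = Ax^* \ge A(r_2^{-1}y) \ge \varphi(1/r_2)\,y,
\]
that is, $y\le \frac{1}{\varphi(1/r_2)}x^*$. Iterating in the same way as in Step 1 (exactly the type of argument behind \eqref{Khachatryan45}) yields $y\le \frac{1}{\varphi_n(1/r_2)}\,x^*$ for all $n$.

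\emph{Step 3 (the key fact and the limit).} The point needing care is that $\varphi_n(s)\to1$ for every $s\in(0,1)$. Since $\varphi$ is concave with $\varphi(0)=0$, $\varphi(1)=1$ and $\varphi'(+0)=+\infty$, we have $\varphi(s)>s$ on $(0,1)$. Concavity gives $\varphi(s)\ge s$; if $\varphi(s_0)=s_0$ for some interior $s_0$, then concavity together with $\varphi'(+0)=+\infty$ forces a contradiction. Hence $\varphi_n(s)$ is increasing and bounded by $1$, and by continuity its limit is a fixed point of $\varphi$ in $(s,1]$, which must be $1$. Quantitatively, estimate (3.6) of \cite{khach9} also gives $1-\varphi_n(s)\le(1-s)k^n$.

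\emph{Conclusion.} Let $n\to\infty$ in
\[
\varphi_n(r_1)\,x^*\le y\le \frac{1}{\varphi_n(1/r_2)}\,x^*.
\]
The elements $y-\varphi_n(r_1)x^*$ and $\frac{1}{\varphi_n(1/r_2)}x^*-y$ lie in $K$ and converge in norm to $y-x^*$ and $x^*-y$ respectively. Since $K$ is closed, both $y-x^*$ and $x^*-y$ belong to $K$, so $y=x^*$ because $K\cap(-K)=\{\theta\}$.
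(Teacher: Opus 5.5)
Your proof is correct and follows essentially the same route as the paper: both establish $\varphi_n(r_1)\,x^*\le y\le \frac{1}{\varphi_n(1/r_2)}\,x^*$ for all $n$ by induction, then let $n\to\infty$ and use closedness of $K$. You additionally spell out two points the paper only asserts, namely the upper-bound induction (applying strong concavity at $u=y$) and the justification that $\varphi_n(s)\to 1$ via $\varphi(s)>s$ on $(0,1)$.
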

\begin{proof}
Assume the contrary: there exists
$\tilde{x} \in \langle r_1 x^*, r_2 x^* \rangle$
$\tilde{x} \neq x^*$,
such that
$
A \tilde{x} = \tilde{x}.
$
Taking into account the monotonicity and strong concavity
of the operator $A$ as well as using the equalities
$A \tilde{x} = \tilde{x}$, $A x^* = x^*$,
one can easily prove by mathematical induction
that the following two–sided inequality holds:
\begin{equation}\label{Khachatryan63}
\underbrace{\varphi\!\bigl(\varphi(\dots \varphi(r_1))\bigr)}_n\, x^*
\;\le\;
\tilde{x}
\;\le\;
\frac{1}{\underbrace{\varphi\!\bigl(\varphi(\dots \varphi(1/r_2)\dots)\bigr)}_n}\, x^*,
\quad n = 1,2,\dots .
\end{equation}
Since
$
\lim\limits_{n \to \infty}
\underbrace{\varphi\!\bigl(\varphi(\dots \varphi(r_1))\bigr)}_n
=
\lim\limits_{n \to \infty}
\underbrace{\varphi\!\bigl(\varphi(\dots \varphi(1/\tau_2))\bigr)}_n
= 1,
$
due to the closedness of cone $K$, from \eqref{Khachatryan63} we obtain
that $\tilde{x} = x^*$.
Thus the theorem is proved.
\end{proof}

\medskip
We now formulate a uniqueness theorem for a fixed point
for the operator $A + A_0$, where $A$ and $A_0$ satisfy the monotonicity condition, with $A$ being a strongly concave operator and $A_0$ being concave:

\medskip
\begin{theorem}\label{thm8}
Let $A : K \to K$ be a monotone and strongly concave operator
in the cone $K$.
Further, let $A_0 : K \to K$ be a monotone and critical operator,
and suppose
$
(A + A_0)\tilde{x} = \tilde{x}
$
where $\tilde{x} \in K$, $\tilde{x} \neq \theta$.
If there exists a constant $C_0 > 0$ such that
$
A_0 u \le C_0 A u
$
and
$
A_0(tu) \ge t A_0 u,
\ t \in [0,1],\ u \in K,
$
then for any $\alpha_1 \in (0,1)$, $\alpha_2 > 1, $
the operator $A + A_0$ has no fixed points other than $\tilde{x}$ in the conical segment
$\langle \alpha_1 \tilde{x}, \alpha_2 \tilde{x} \rangle.$
\end{theorem}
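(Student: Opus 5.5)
The plan is to reduce Theorem~\ref{thm8} to Theorem~\ref{thm7}, applied to the operator $B:=A+A_0$ with the modified concave mapping $\Phi$ from \eqref{Khachatryan39}. Theorem~\ref{thm7} only needs $B$ to be monotone and strongly concave on the whole cone $K$, together with a nonzero fixed point. Neither normality of $K$ nor continuity of $B$ enters there.

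\textbf{Step 1: $B$ is a monotone self-map of $K$.} This is immediate, since $A$ and $A_0$ are both monotone maps $K\to K$ and $K$ is a convex cone. The fixed point $\tilde{x}\neq\theta$ with $B\tilde{x}=\tilde{x}$ is given by hypothesis.

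\textbf{Step 2: strong concavity of $B$ on all of $K$.} Define
\[
\Phi(\sigma):=\frac{C_0}{1+C_0}\,\sigma+\frac{1}{1+C_0}\,\varphi(\sigma),\qquad \sigma\in[0,1].
\]
I would repeat the computation from the proof of Theorem~\ref{thm5}, now for every $u\in K$. This is legitimate because the hypotheses $A_0u\le C_0Au$ and $A_0(tu)\ge tA_0u$ are assumed on all of $K$, not only on a conical segment. The computation gives
\[
B(\sigma u)-\Phi(\sigma)Bu\ge \frac{\varphi(\sigma)-\sigma}{1+C_0}\,\bigl(C_0Au-A_0u\bigr)\ge\theta .
\]
This uses $\varphi(\sigma)\ge\sigma$, which follows from concavity of $\varphi$ together with $\varphi(0)=0$ and $\varphi(1)=1$.

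\textbf{Step 3: $\Phi$ is an admissible concave mapping.} I need to check that $\Phi$ has all the properties required in the definition of strong concavity:
\begin{enumerate}
\item[(i)] $\Phi$ is continuous, increasing and concave, as a convex combination of $\sigma$ and $\varphi$;
\item[(ii)] $\Phi(0)=0$ and $\Phi(1)=1$;
\item[(iii)] $\Phi'(+0)=\frac{C_0}{1+C_0}+\frac{1}{1+C_0}\varphi'(+0)=+\infty$;
\item[(iv)] $\Phi$ maps $[0,1]$ into $[0,1]$.
\end{enumerate}
Hence $B$ is monotone and strongly concave on $K$ with mapping $\Phi$.

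\textbf{Step 4: conclusion.} Apply Theorem~\ref{thm7} to $B$, with $x^*:=\tilde{x}$, $r_1:=\alpha_1$ and $r_2:=\alpha_2$. It yields that $B$ has no fixed point in $\langle\alpha_1\tilde{x},\alpha_2\tilde{x}\rangle$ other than $\tilde{x}$.

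The point needing care is the argument inside Theorem~\ref{thm7}, namely the induction giving
\[
\Phi^{(n)}(\alpha_1)\,\tilde{x}\le y\le \frac{1}{\Phi^{(n)}(1/\alpha_2)}\,\tilde{x}
\]
for any other fixed point $y$, where $\Phi^{(n)}$ denotes the $n$-fold iterate of $\Phi$.

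\begin{enumerate}
\item[(a)] For the induction, suppose $c\,\tilde{x}\le y$. Applying $B$ and strong concavity gives $\Phi(c)\,\tilde{x}\le y$. For the upper bound, suppose $y\le d\,\tilde{x}$ with $d>1$. Writing $\tilde{x}=\frac1d(d\tilde{x})$ gives $\tilde{x}=B\tilde{x}\ge\Phi(1/d)\,B(d\tilde{x})\ge\Phi(1/d)\,y$.
\item[(b)] This step applies strong concavity at the point $d\tilde{x}$, which need not lie in any small segment. That is exactly why concavity on all of $K$ is used.
\item[(c)] The iterates $\Phi^{(n)}(s)$ tend to $1$ for $s\in(0,1)$. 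They increase, since $\Phi(s)>s$ on $(0,1)$ by strict concavity forced by $\Phi'(+0)=\infty$, and the limit must be a fixed point of $\Phi$ in $(0,1]$, hence equal to $1$.
\item[(d)] Passing to the limit, using that $K$ is closed, gives $\tilde{x}\le y\le\tilde{x}$, so $y=\tilde{x}$.
\end{enumerate}
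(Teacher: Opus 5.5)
Your proposal is correct and follows the paper's own route. The paper likewise gets Theorem~\ref{thm8} by noting that $A+A_0$ is monotone and strongly concave with the mapping $\Phi$ from the proof of Theorem~\ref{thm5}, with the computation now valid on all of $K$ because both hypotheses on $A_0$ are assumed there, and then invoking Theorem~\ref{thm7}. The one small inaccuracy is in (c): $\Phi'(+0)=+\infty$ does not force strict concavity, but it does give $\Phi(s)>s$ on $(0,1)$ (a concave $\Phi$ with $\Phi(0)=0$, $\Phi(1)=1$ that meets the chord $y=s$ at an interior point must coincide with it), and that is all your limit argument needs.
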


\medskip
The proof follows immediately from the strong concavity of the operator $A + A_0$
(see the proof of Theorem~\ref{thm5}),
the monotonicity of the operators $A$, $A_0$,
and from Theorem~\ref{thm7}.
\hfill $\qed$

\medskip
\begin{remark}
It is interesting to note that in Theorems~\ref{thm7} and~\ref{thm8}
we did not require the operators to satisfy the continuity condition,
and the corresponding cone was arbitrary.
\end{remark}

Now, using monotone continuous and concave operators,
we construct continuous and critical operators
that do not possess the properties of concavity and monotonicity,
and for which the set of fixed points has the cardinality of the continuum.

For this purpose we first introduce several definitions
and notations needed for the formulation of corresponding theorems.

Let $x^* \in K$, $x^* \neq \theta$ be an arbitrary element.
Consider the following open ball in our Banach space $E$:
$$
B := \{ y \in E : \|y - x^*\| < \|x^*\| \}.
$$
The set
$
T_{x^*} := B \cap K
$
will be called the \emph{local section of the cone $K$ at the point $x^*$}.
Define the following operator on the cone $K$:
\begin{equation}\label{Khachatryan64}
\pi(x) :=
\begin{cases}
x^*, & \text{if } x \in T_{x^*}, \\[1ex]
x + \dfrac{\|x^*\|}{\|x - x^*\|}\,(x^* - x), & \text{if } x \in K \setminus T_{x^*}.
\end{cases}
\end{equation}

Note that the operator $\pi$ is continuous
since
the values of the operator $\pi$ ``from both sides'' coincide with $x^*$ on the boundary
$
S := \{ x \in K : \|x - x^*\| = \|x^*\| \}.
$

Let us verify that $\pi : K \to K$.
Indeed, let $x \in K$ be arbitrary.
If $x \in T_{x^*}$, then $\pi(x) = x^* \in K$.
If $x \in K \setminus T_{x^*}$, then due to the convexity of the cone $K$,
the inequality
$
0 < \frac{\|x^*\|}{\|x - x^*\|} \le 1
$
and formula \eqref{Khachatryan64} imply
$$
\pi(x)
= \frac{\|x^*\|}{\|x - x^*\|}\,x^*
+ \left( 1 - \frac{\|x^*\|}{\|x - x^*\|} \right) x
\in K.
$$

Now let $A : K \to K$ be a monotone and continuous operator in the cone $K$,
such that
$
A(tu) \ge t A u,
\ u \in K,\ t \in [0,1].
$
Assume that $A$ has a unique fixed point
$x^* \in K$, $x^* \neq \theta$:
$
A x^* = x^*.
$
We show that there exists a continuous and critical operator
$
\widetilde{A} := \widetilde{A}(x^*, A),
$
acting in the cone $K$, whose set of fixed points coincides with the closure of the local section of the cone $K$ at the point $x^*$.

Indeed, define the operator $\widetilde{A}$ as follows:
\begin{equation}\label{Khachatryan65}
\widetilde{A}x := x + A\bigl(\pi(x)\bigr) - \pi(x),
\quad x \in K.
\end{equation}

From \eqref{Khachatryan64} and \eqref{Khachatryan65} it follows that all elements of the set
$\overline{T}_{x^*}$ are exactly the set of fixed points of the operator
$\widetilde{A}$, since
$
\widetilde{A}x
= x + A x^* - x^*
= x,
\ x \in \overline{T}_{x^*}.
$

Clearly, the set $\overline{T}_{x^*}$ has the cardinality of the continuum,
since, for example, for all $t \in [0,1]$ the elements $t x^* \in T_{x^*}$.

Let us now verify that $\widetilde{A} : K \to K$.
If $x \in T_{x^*}$, then as already noted,
$
\widetilde{A}x = x \in K.
$
If $x \in K \setminus  T_{x^*}$, then taking into account consideration
the monotonicity of the operator $A$ and the inequality
$
A(tu) \ge t A u,
\ t \in [0,1],\; u \in K,
$
we obtain
 \begin{align*}&
\widetilde{A}x
= x + A\!\left(
\frac{\|x^*\|}{\|x - x^*\|} x^*
+ \left(1 - \frac{\|x^*\|}{\|x - x^*\|}\right) x
\right)-
 \\& \nonumber
- \frac{\|x^*\|}{\|x - x^*\|} x^*
- \left(1 - \frac{\|x^*\|}{\|x - x^*\|}\right) x\ge
 \\& \nonumber
\ge A\!\left(
\frac{\|x^*\|}{\|x - x^*\|} x^*
\right)
- \frac{\|x^*\|}{\|x - x^*\|} x^*+\frac{\|x^*\|}{\|x - x^*\|} x\ge
 \\& \nonumber
\ge \frac{\|x^*\|}{\|x - x^*\|} A x^*
- \frac{\|x^*\|}{\|x - x^*\|} x^*+\frac{\|x^*\|}{\|x - x^*\|} x=
\frac{\|x^*\|}{\|x - x^*\|} x
\ge \theta,
 \end{align*}
since
$
\dfrac{\|x^*\|}{\|x - x^*\|} \in (0,1],
\ x \in K \setminus T_{x^*}.
$

Now note that if $x \in K \setminus \overline{T}_{x^*}$,
then $\widetilde{A}x \neq x$.
Indeed, suppose that there exists $x_0 \in K \setminus \overline{T}_{x^*}$ such that
$
\widetilde{A} x_0 = x_0 .
$
First,
$
\frac{\|x^*\|}{\|x_0 - x^*\|} \in (0,1),
$
and from \eqref{Khachatryan65} together with \eqref{Khachatryan64} we obtain
$$
A\!\left(
\frac{\|x^*\|}{\|x_0 - x^*\|} x^*
+ \left(1 - \frac{\|x^*\|}{\|x_0 - x^*\|}\right) x_0
\right)=$$$$=
\frac{\|x^*\|}{\|x_0 - x^*\|} x^*
+ \left(1 - \frac{\|x^*\|}{\|x_0 - x^*\|}\right) x_0 .
$$

Since
$
\frac{\|x^*\|}{\|x_0 - x^*\|} x^*
+ \left(1 - \frac{\|x^*\|}{\|x_0 - x^*\|}\right) x_0
\in K,
$
taking into account the uniqueness of the nonzero fixed point of the operator $A$
in the cone $K$, we obtain
$
\frac{\|x^*\|}{\|x_0 - x^*\|} x^*
+ \left(1 - \frac{\|x^*\|}{\|x_0 - x^*\|}\right) x_0
= x^*.
$
Hence, since $\frac{\|x^*\|}{\|x_0 - x^*\|}\in(0,1)$, we obtain $x_0 = x^*,$
which is impossible because $x^* \in \overline{T}_{x^*},$ whereas $x_0 \in K \setminus \overline{T}_{x^*}$.

Thus, based on the above, we arrive at the following result.
\begin{theorem}\label{thm9}
Let $A : K \to K$ be a monotone and continuous operator
in the cone $K$, such that
$
A(tu) \ge t A u,
\ u \in K,\; t \in [0,1].
$
If the operator $A$ has a unique fixed point
$x^* \in K$, $x^* \ne \theta$, then the following statements hold:
\begin{enumerate}
\item[1)]
the operator
$
\widetilde{A} := \widetilde{A}(x^*,A),
$
defined by means of \eqref{Khachatryan64}, \eqref{Khachatryan65},
is continuous and critical, and
$\widetilde{A} : K \to K$;
\item[2)]
the set of fixed points of the operator $\widetilde{A}$ coincides
with the closure of the local section of the cone $K$ at the point $x^*$.
\end{enumerate}\hfill\qed
\end{theorem}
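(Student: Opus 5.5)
The plan is to verify the four properties of $\widetilde{A}$ directly from the definitions \eqref{Khachatryan64}, \eqref{Khachatryan65}: (i) $\widetilde{A}$ is continuous, (ii) it maps $K$ into $K$, (iii) it is critical, and (iv) its fixed point set is exactly $\overline{T}_{x^*}$.

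\emph{Continuity.} First we show that $\pi$ is continuous on $K$. On the open set $T_{x^*}$ it is constant. On $\{x\in K:\|x-x^*\|>\|x^*\|\}$ it is given by an explicit formula that is continuous, since the denominator $\|x-x^*\|$ does not vanish there. On the sphere $S$ the second formula gives $x+(x^*-x)=x^*$. Hence for any sequence approaching a point of $S$, the values of both formulas tend to $x^*$; the second uses continuity of the norm. Then $\widetilde{A}=I+A\circ\pi-\pi$ is continuous as a composition and sum of continuous maps, using the continuity of $A$.

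\emph{Criticality.} We have $\|\theta-x^*\|=\|x^*\|$, so $\theta\in S\subset K\setminus T_{x^*}$ and $\pi(\theta)=x^*$. Therefore $\widetilde{A}\theta=\theta+Ax^*-x^*=\theta$. Note that $\theta\in\overline{T}_{x^*}$ is consistent with (iv).

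\emph{Invariance of $K$.} If $x\in T_{x^*}$, then $\widetilde{A}x=x$. If $x\in K\setminus T_{x^*}$, put $\lambda=\|x^*\|/\|x-x^*\|\in(0,1]$, so that $\pi(x)=\lambda x^*+(1-\lambda)x\ge\lambda x^*$. By monotonicity and the hypothesis $A(tu)\ge tAu$,
\[
A(\pi(x))\ge A(\lambda x^*)\ge\lambda Ax^*=\lambda x^*,
\]
and hence $\widetilde{A}x\ge x+\lambda x^*-\lambda x^*-(1-\lambda)x=\lambda x\ge\theta$.

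\emph{Fixed point set.} For $x\in T_{x^*}$ the computation above gives $\widetilde{A}x=x$. For $x\in S$ we have $\pi(x)=x^*$, so again $\widetilde{A}x=x$. Thus every point of $\overline{T}_{x^*}\cap K$ is fixed; since $K$ is closed, this set equals $\overline{T}_{x^*}$.

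Conversely, suppose $\widetilde{A}x_0=x_0$ with $\|x_0-x^*\|>\|x^*\|$. Then $\lambda\in(0,1)$ and $y:=\pi(x_0)\in K$ satisfies $Ay=y$. The point needing care is that $y\ne\theta$, because uniqueness is assumed only for nonzero fixed points, and $A\theta$ may well equal $\theta$. This follows from $y\ge\lambda x^*$ together with $x^*\ne\theta$ and the fact that $K$ is a cone (so $K\cap(-K)=\{\theta\}$): if $y=\theta$, then $-\lambda x^*\in K$, a contradiction. Uniqueness then gives $y=x^*$, so $(1-\lambda)(x_0-x^*)=\theta$ and $x_0=x^*$. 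But $x^*\in T_{x^*}$, contradicting $\|x_0-x^*\|>\|x^*\|$.

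The only delicate steps are the continuity across $S$ and this nonzero check for $\pi(x_0)$; everything else is direct computation.
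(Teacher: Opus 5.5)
Your proposal is correct and follows essentially the paper's route: continuity of $\pi$ from its agreement with $x^*$ on $S$, the bound $\widetilde{A}x\ge\lambda x$ for $x\in K\setminus T_{x^*}$, and the reduction of a fixed point $x_0$ with $\|x_0-x^*\|>\|x^*\|$ to the fixed point $\pi(x_0)$ of $A$. You are slightly more careful than the paper in checking criticality via $\theta\in S$ and in verifying $\pi(x_0)\neq\theta$ before invoking uniqueness of the nonzero fixed point. One detail remains implicit in both your argument and the paper's: $S\subseteq\overline{T}_{x^*}$, which is needed to identify $\overline{T}_{x^*}$ with $\{x\in K:\|x-x^*\|\le\|x^*\|\}$, and which follows because $(1-t)x+tx^*\in T_{x^*}$ for $x\in S$, $t\in(0,1]$.
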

Now for any number $\lambda \in (0,1)$ define the following
subsets of the conical segment $\langle \theta, x^* \rangle$:
$$
D_{\lambda,x^*}
:=
\bigl\{
x \in \langle \theta, x^* \rangle :
\|x - x^*\| > \lambda \|x^*\|
\bigr\}.
$$
Clearly $D_{\lambda,x^*} \ne \varnothing$, since, for example,
$\dfrac{1-\lambda}{2} x^* \in D_{\lambda,x^*}$.

Now
define the following operators on the conical segment $\langle \theta, x^* \rangle:$
\begin{equation}\label{Khachatryan66}
\mathcal{P}_{\lambda}(x) :=
\begin{cases}
x^* - x, & \text{if } x \in D_{\lambda,x^*},\\[6pt]
\dfrac{\|x - x^*\|}{\lambda \|x^*\|}(x^* - x),
& \text{if } x \in \langle \theta, x^* \rangle \setminus D_{\lambda,x^*},
\end{cases}
\end{equation}
\begin{equation}\label{Khachatryan67}
\widehat{A}x
:=
A\bigl(\mathcal{P}_{\lambda}(x) + x\bigr)
-
\mathcal{P}_{\lambda}(x),
\qquad
\lambda \in (0,1),
\;
x \in \langle \theta, x^* \rangle.
\end{equation}

Proceeding analogously to the proof of Theorem~\ref{thm9},
we obtain the following theorem.

\medskip
\begin{theorem}\label{thm10}
Under assumptions of Theorem~\ref{thm9} the operator
$\widehat{A} := \widehat{A}(x^*,A)$,
defined by means of \eqref{Khachatryan66}, \eqref{Khachatryan67},
is continuous and critical, and
$
\widehat{A} :
\langle \theta, x^* \rangle
\to
\langle \theta, x^* \rangle .
$
Moreover, for any $\lambda \in (0,1)$
the closure of the set $D_{\lambda,x^*}$
coincides with the set of fixed points of the operator $\widehat{A}$.\hfill\qed
\end{theorem}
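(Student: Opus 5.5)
The plan follows the scheme of the proof of Theorem~\ref{thm9}. First I will rewrite $\mathcal{P}_\lambda$ in a single formula,
\[
\mathcal{P}_\lambda(x)=s(x)\,(x^*-x),\qquad s(x):=\min\Bigl\{1,\frac{\|x-x^*\|}{\lambda\|x^*\|}\Bigr\},\quad x\in\langle\theta,x^*\rangle .
\]
Then $s(x)=1$ exactly on $\overline{D}_{\lambda,x^*}\cap\{\|x-x^*\|\ge\lambda\|x^*\|\}$, and $s(x)\in[0,1)$ elsewhere. The two branches of \eqref{Khachatryan66} agree on the sphere $\|x-x^*\|=\lambda\|x^*\|$ and $s$ is continuous, so $\mathcal{P}_\lambda$ is continuous. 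Since $\mathcal{P}_\lambda(x)+x=s(x)x^*+(1-s(x))x$ and $A$ is continuous, $\widehat{A}$ is continuous as well.

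Next I will check the invariance $\widehat{A}\colon\langle\theta,x^*\rangle\to\langle\theta,x^*\rangle$. Put $s=s(x)$ and $w=sx^*+(1-s)x\in\langle\theta,x^*\rangle$.

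\emph{Upper bound.} Monotonicity gives $Aw\le Ax^*=x^*$. Hence $\widehat{A}x=Aw-s(x^*-x)\le x^*-s(x^*-x)\le x^*$.

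\emph{Lower bound.} Monotonicity and $A(tu)\ge tAu$ give $Aw\ge A(sx^*)\ge sx^*$. Hence $\widehat{A}x\ge sx^*-s(x^*-x)=sx\ge\theta$.

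For criticality, note that $\|\theta-x^*\|=\|x^*\|>\lambda\|x^*\|$, so $\theta\in D_{\lambda,x^*}$. Then $\widehat{A}\theta=Ax^*-x^*+\theta=\theta$.

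For the fixed-point set I will proceed as follows.
\begin{enumerate}
\item[(a)] If $x\in\overline{D}_{\lambda,x^*}$, then $s=1$ (by continuity of $s$), so $w=x^*$. Therefore $\widehat{A}x=Ax^*-(x^*-x)=x$, and every such $x$ is a fixed point.
\item[(b)] If $x\notin\overline{D}_{\lambda,x^*}$ and $\widehat{A}x=x$, then $Aw=w$. If $s>0$, then $w\ge sx^*\ne\theta$. The uniqueness of the nonzero fixed point of $A$ gives $w=x^*$, i.e.\ $(1-s)(x-x^*)=\theta$, so $x=x^*$. This is exactly the argument used after \eqref{Khachatryan65}.
\end{enumerate}

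The main obstacle is the single point $x=x^*$. There $s=0$, and $\widehat{A}x^*=Ax^*=x^*$, while $\|x^*-x^*\|=0<\lambda\|x^*\|$, so $x^*\notin\overline{D}_{\lambda,x^*}$. Step (b) therefore shows that the fixed points outside $\overline{D}_{\lambda,x^*}$ are confined to $\{x^*\}$, but it cannot exclude $x^*$ itself. I will try to resolve this point first, by examining whether the definition \eqref{Khachatryan66} or the intended meaning of ``the set of fixed points'' can accommodate $x^*$. If it cannot, the argument proves the inclusion $\overline{D}_{\lambda,x^*}\subset\operatorname{Fix}\widehat{A}$, and the reverse inclusion holds only away from the trivially fixed element $x^*$.
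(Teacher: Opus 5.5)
Your checks of continuity, criticality, the invariance of $\langle\theta,x^*\rangle$ (via $w=sx^*+(1-s)x$ and $sx^*\le Aw\le x^*$), and the inclusion of $\overline{D}_{\lambda,x^*}$ in the fixed-point set are correct. Your diagnosis at $x^*$ is also correct, and no reinterpretation removes it. Since $x^*\notin D_{\lambda,x^*}$, the second branch of \eqref{Khachatryan66} applies unambiguously and gives $\mathcal{P}_\lambda(x^*)=\theta$, hence $\widehat{A}x^*=Ax^*=x^*$. But $\overline{D}_{\lambda,x^*}\subset\{x:\|x-x^*\|\ge\lambda\|x^*\|\}$ does not contain $x^*$. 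So the ``coincides'' assertion is false for every admissible $A$ and every $\lambda$. The paper's only justification is ``analogously to Theorem~\ref{thm9}'', and the analogy breaks exactly here: in Theorem~\ref{thm9} the point $x^*$ lies in $T_{x^*}$, whereas $D_{\lambda,x^*}$ removes a ball around $x^*$.

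Your fallback (``the reverse inclusion holds away from $x^*$'') is also wrong in general, because your description of $s$ is wrong. In fact $s(x)=1$ on the whole closed set $F_\lambda:=\{x\in\langle\theta,x^*\rangle:\ \|x-x^*\|\ge\lambda\|x^*\|\}$, which can be strictly larger than $\overline{D}_{\lambda,x^*}$. In Theorem~\ref{thm9} one gets $\overline{T}_{x^*}=\{x\in K:\|x-x^*\|\le\|x^*\|\}$ by moving along segments toward $x^*$. Nothing similar forces boundary points of the complement of a ball to be limits of points of $D_{\lambda,x^*}$. At such points step (b) needs $1-s\neq0$, which fails, and these points really are fixed.

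For an example, take $E=\mathbb{R}^2$ with the max-norm, $K=\mathbb{R}^2_+$, and $Au=\bigl(\tfrac{u_1+u_2/\lambda}{2}\bigr)^{1/2}(1,\lambda)$. This $A$ satisfies the hypotheses of Theorem~\ref{thm9} with unique nonzero fixed point $x^*=(1,\lambda)$, and $D_{\lambda,x^*}=\{y\in\langle\theta,x^*\rangle:\ y_1<1-\lambda\}$. The point $x=(1,0)$ has $\|x-x^*\|=\lambda=\lambda\|x^*\|$, so $s(x)=1$, $\mathcal{P}_\lambda(x)+x=x^*$ and $\widehat{A}x=x$, although $x\notin\overline{D}_{\lambda,x^*}$.

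What your steps (a) and (b) actually prove, once (a) is applied to $F_\lambda$ rather than to $\overline{D}_{\lambda,x^*}$, is that the fixed-point set of $\widehat{A}$ is exactly $F_\lambda\cup\{x^*\}$. This equals $\overline{D}_{\lambda,x^*}\cup\{x^*\}$ when, for instance, the norm is monotone and strictly convex (as in $L_p$, $1<p<\infty$). It can fail for sup-norms such as those used in Section~\ref{sec5}.
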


\medskip
\begin{remark}
By direct verification, one can see that, generally speaking,
the operators $\widetilde{A}$ and $\widehat{A}$ constructed above
do not possess the property of concavity.
We assume that the presence of an infinite set
of fixed points for such operators
is primarily due to the absence
of the concavity (or strong concavity)
property for the operators $\widetilde{A}$ and $\widehat{A}$.
We note that the monotonicity property of
$\widetilde{A}$ and $\widehat{A}$
is yet another important property
that the operators do not possess.
\end{remark}
\section{Applications}\label{sec5}

\subsection{Application of the obtained results
to the dynamic theory of $p$-adic strings}\label{subsec5.1}

Let $B$ be an $n$-dimensional elliptic operator of the following form:
$$
B :=
\beta_0 \frac{\partial^2}{\partial t^2}
+ \beta_1 \frac{\partial^2}{\partial x_1^2}
+ \cdots
+ \beta_{n-1} \frac{\partial^2}{\partial x_{n-1}^2},
$$
where $\beta_j > 0$, $j = 0,1,\ldots,n-1$, are constants.
Consider the following nonlinear pseudodifferential equation
in $\mathbb{R}^n$:
\begin{equation}\label{Khachatryan68}
\bigl(e^{B} f\bigr)
\bigl(t,x_1,\ldots,x_{n-1}\bigr)
=
f^{p}\bigl(t,x_1,\ldots,x_{n-1}\bigr),
\
(t,x_1,\ldots,x_{n-1}) \in \mathbb{R}^n,
\end{equation}
with respect to the unknown real-valued function
$f(t,x_1,\ldots,x_{n-1})$,
where $p>1$ is an odd number.
Equation \eqref{Khachatryan68} appears in the dynamic theory of
$p$-adic strings for the scalar tachyonic field,
where $f$ is the tachyonic field for open strings
(see~\cite{bre5}, \cite{vla6}).
It should be noted that $p$-adic strings are of interest
as a certain approximation to nonlocal theories
arising from string field theory
(see~\cite{aref10}).
We will now assign precise meaning to equation \eqref{Khachatryan68}.
Equation \eqref{Khachatryan68} represents
a pseudodifferential equation
with a symbol of the form
$
\exp\!\left(
- \sum\limits_{j=0}^{n-1} e^{\beta_j} \varkappa_j^2
\right),
$ $
\varkappa_j \in \mathbb{R},
\ j=0,1,\ldots,n-1,
$
which for positive $\beta_j$,
$j=0,1,\ldots,n-1$,
can be represented
as a nonlinear integral equation
(see, for example,~\cite{vla6}),
\begin{align}\label{Khachatryan69}&
\frac{1}{(2\pi)^{\,n-1}}
\int\limits_{-\infty}^{\infty}\dots
\int\limits_{-\infty}^{\infty}
\widehat{f}(\tau,\vec{\ell})\,
H_{\beta_0}(t-\tau)\cdot
\\&\nonumber\cdot
\exp\!\left(
-\sum\limits_{j=1}^{n-1}\beta_j \ell_j^{\,2}
- i(\vec{x},\vec{\ell})
\right)
\,d\tau d\ell_1 \cdots d\ell_{n-1}=\\&\nonumber
=f^{p}(t,x_1,\ldots,x_{n-1}),\quad (t,x_1,\ldots,x_{n-1})\in\mathbb{R}^n,
\end{align}
where
\begin{align}\label{Khachatryan70}&
\vec{x}:=(x_1,\ldots,x_{n-1}),\quad
\vec{\ell}:=(\ell_1,\ldots,\ell_{n-1}),\quad (\vec{x},\vec{\ell}) := \sum_{j=1}^{n-1} x_j \ell_j,
\\&\nonumber
 H_{\beta_j}(u)
:=
\frac{1}{\sqrt{4\pi\beta_j}}
\exp\!\left(-\frac{u^2}{4\beta_j}\right),\;
u\in\mathbb{R},
\;
j=0,1,\ldots,n-1,
\end{align}
and
\begin{equation}\label{Khachatryan71}
\widehat{f}\left(t,\vec{\ell}\right)
:=
\int\limits_{-\infty}^{\infty} \!\!\cdots
\int\limits_{-\infty}^{\infty}
f\left(t,\vec{y}\right)\,
\exp\left(i\left(\vec{y},\vec{\ell}\right)\right)
\,dy_1\cdots dy_{n-1}.
\end{equation}

A solution $f(t,x_1,\ldots,x_{n-1})$ of equation
\eqref{Khachatryan68} (or equation \eqref{Khachatryan69}) should be sought
in the class of tempered generalized functions
$S'(\mathbb{R}^n)\subset\mathcal{D}'(\mathbb{R}^n)$
(see~\cite{gel11},\cite{Io} for details on the spaces
$S'(\mathbb{R}^n)$ and $\mathcal{D}'(\mathbb{R}^n)$).
It is noteworthy to point out that Hadamard-Bergman-type linear convolution operators have been thoroughly researched in the class of tempered generalized functions in \cite{KV}.

Taking into account \eqref{Khachatryan70}, \eqref{Khachatryan71}, after simple
but cumbersome calculations from \eqref{Khachatryan69} we obtain
the following nonlinear integral equation:
\begin{equation}\label{Khachatryan72}
\begin{array}{c}
\displaystyle f^{p}(t,x_1,\ldots,x_{n-1})
=\\
\displaystyle=\int\limits_{-\infty}^{\infty} \!\!\cdots
\int\limits_{-\infty}^{\infty}
H_{\beta_0}(t-\tau)
\prod_{j=1}^{n-1}
H_{\beta_j}(x_j-y_j)
f(\tau,y_1,\ldots,y_{n-1})
\, d\tau\,dy_1\cdots dy_{n-1},
\\
\displaystyle(t,x_1,\ldots,x_{n-1})\in\mathbb{R}^n .
\end{array}
\end{equation}

By direct verification, one can see that if
$\varphi(t,x_1,\ldots,x_{n-1})$
is a continuous function on
$
\mathbb{R}_+^{\,n}
=
\underbrace{\mathbb{R}_+\times\cdots\times\mathbb{R}_+}_{n},
\
\mathbb{R}_+ := [0,+\infty),
$
which solves the equation
\begin{equation}\label{Khachatryan73}
\begin{array}{c}
\displaystyle\varphi^p(t,x_1,\ldots,x_{n-1})
=\\
\displaystyle=\int\limits_{0}^{\infty}\!\!\cdots\!\!
\int\limits_{0}^{\infty}
\Bigl(
H_{\beta_0}(t-\tau)-H_{\beta_0}(t+\tau)
\Bigr)
\prod_{j=1}^{n-1}
\Bigl(
H_{\beta_j}(x_j-y_j)
-
H_{\beta_j}(x_j+y_j)
\Bigr)\times
\\
\quad\times
\varphi(\tau,y_1,\ldots,y_{n-1})
\, d\tau\,dy_1\cdots dy_{n-1},\quad
(t,x_1,\ldots,x_{n-1})\in\mathbb{R}_+^{\,n},
\end{array}
\end{equation}
and

\begin{enumerate}
\item[1)]
$
\varphi_1(t,x_1,\ldots,x_{n-1})
=
\begin{cases}
\varphi(t,x_1,\ldots,x_{n-1}),
& t\ge 0,\ (x_1,\ldots,x_{n-1})\in\mathbb{R}_+^{\,n-1},
\\[2mm]
-\varphi(-t,x_1,\ldots,x_{n-1}),
& t<0,\ (x_1,\ldots,x_{n-1})\in\mathbb{R}_+^{\,n-1},
\end{cases}
$

\item[2)]
$
\varphi_2(t,x_1,\ldots,x_{n-1})
=
\begin{cases}
\varphi_1(t,x_1,\ldots,x_{n-1}),
& t\in\mathbb{R},\ x_1\ge 0,\\ &(x_2,\ldots,x_{n-1})\in\mathbb{R}_+^{\,n-2},
\\[2mm]
-\varphi_1(t,-x_1,x_2,\ldots,x_{n-1}),
& t\in\mathbb{R},\ x_1<0,\\ &(x_2,\ldots,x_{n-1})\in\mathbb{R}_+^{\,n-2},
\end{cases}
$

$$
\ldots\ldots\ldots\ldots\ldots\ldots\ldots\ldots\ldots\ldots\ldots\ldots\ldots\ldots\ldots\ldots\ldots\ldots\ldots\ldots
$$

\item[$n-1$)]
$
\varphi_{n-1}(t,x_1,\ldots,x_{n-1})
=
\begin{cases}
\varphi_{n-2}(t,x_1,\ldots,x_{n-1}),
\qquad x_{n-2},x_{n-1}\ge 0,\\ \qquad\qquad\qquad\qquad\qquad(t,x_1,\ldots,x_{n-3})\in\mathbb{R}^{\,n-2},
\\[2mm]
-\varphi_{n-2}(t,x_1,\ldots x_{n-3},-x_{n-2},x_{n-1}),  x_{n-2}<0, x_{n-1}\ge0,
\\[2mm]
\qquad\qquad\qquad\qquad\qquad(t,x_1,\ldots,x_{n-2})\in\mathbb{R}^{\,n-3},
\end{cases}
$
\end{enumerate}

then the odd extension of the function
$\varphi_{n-1}(t,x_1,\ldots,x_{n-1})$
with respect to the variable $x_{n-1}$ onto $(-\infty,0)$:
$$
f(t,x_1,\ldots,x_{n-1})=
\begin{cases}
\varphi_{n-1}(t,x_1,\ldots,x_{n-1}),\quad x_{n-1}\in\mathbb{R}^+,\\
\qquad\qquad\qquad\qquad\qquad (t,x_1,\ldots,x_{n-2})\in\mathbb{R}^{\,n-1},
\\[2mm]
-\varphi_{n-1}(t,x_1,\ldots,x_{n-2},-x_{n-1}),\quad \ x_{n-1}<0,\\
\qquad\qquad\qquad\qquad\qquad (t,x_1,\ldots,x_{n-2})\in\mathbb{R}^{\,n-1},
\end{cases}
$$
is a continuous solution of equation \eqref{Khachatryan72} on $\mathbb{R}^n.$

We seek a nonnegative, nontrivial, bounded and continuous
solution on $\mathbb{R}_+^{\,n}$ of equation \eqref{Khachatryan73}.

Now, let us consider the following nonlinear integral operator:
\begin{equation}\label{Khachatryan74}
\begin{array}{c}
\displaystyle(A\psi)(t,x_1,\ldots,x_{n-1})
:=
\int\limits_{0}^{\infty}\!\!\cdots\!\!\int\limits_{0}^{\infty}
\bigl(
H_{\beta_0}(t-\tau)-H_{\beta_0}(t+\tau)
\bigr)\times
\\
\displaystyle\times
\prod_{j=1}^{n-1}
\bigl(
H_{\beta_j}(x_j-y_j)-H_{\beta_j}(x_j+y_j)
\bigr)
\psi^\alpha(\tau,y_1,\ldots,y_{n-1})
\,d\tau\,dy_1\cdots dy_{n-1},\\
\displaystyle (t,x_1,\ldots,x_{n-1})\in\mathbb{R}_+^{\,n},
\end{array}
\end{equation}
where
$
\alpha := \dfrac{1}{p}\in(0,1).
$

Let $C_B(\mathbb{R}_+^{\,n})$ be the space of continuous and bounded
functions on $\mathbb{R}_+^{\,n}$.
Consider the cone of nonnegative functions in the space
$C_B(\mathbb{R}_+^{\,n})$:
$$
K :=
\bigl\{
f\in C_B(\mathbb{R}_+^{\,n}) : f\ge 0
\bigr\}.
$$
Obviously, $K$ is a normal cone.
Taking into account \eqref{Khachatryan70} and \eqref{Khachatryan74}, one easily verifies that
$
A:K\to K.
$

Since
$
H_{\beta_j}(x-y)\ge H_{\beta_j}(x+y),
\ j=0,1,\ldots,n-1,\ x,y\in\mathbb{R}_+,
$
it also follows from \eqref{Khachatryan74} that the operator $A$ is monotone.
Let us verify that the operator $A$ is strongly concave in the cone $K$.
Indeed, choosing as the mapping $\varphi$
$
\varphi(\sigma)=\sigma^{\gamma},
$
where $\sigma\in[0,1]$, $\gamma\in[\alpha,1)$, and taking into account the inequality
$H_{\beta_j}(x-y)\ge H_{\beta_j}(x+y)$,
$j=0,1,\ldots,n-1$, $x,y\in\mathbb{R}_+$,
from \eqref{Khachatryan74} we obtain
$$
(A\sigma\psi)(t,x_1,\ldots,x_{n-1})
=
\sigma^{\alpha}
(A\psi)(t,x_1,\ldots,x_{n-1})
\ge
\sigma^{\gamma}(A\psi)(t,x_1,\ldots,x_{n-1}),
$$
\[
(t,x_1,\ldots,x_{n-1})\in\mathbb{R}_+^{\,n},
\qquad
\sigma\in[0,1],\quad \psi\in K.
\]

Choosing $v_0\equiv1\in K$ and $n_0=2$
as the element and the index respectively,
let us verify condition \eqref{Khachatryan4} of Theorem~\ref{thm1}.
For this purpose, for an arbitrary $\varepsilon\in(0,1)$
consider the following characteristic equations:
\begin{equation}\label{Khachatryan75}
\int\limits_{0}^{\infty} H_{\beta_j}(u)e^{-s u}\,du
=
\frac{\varepsilon}{2},
\qquad
j=0,1,\ldots,n-1,
\end{equation}

with respect to the unknown $s\ge0$.
From \eqref{Khachatryan70} it immediately follows that the characteristic equations \eqref{Khachatryan75}
have unique positive solutions
$s_j:=s_j(\varepsilon)$,
$j=0,1,\ldots,n-1$.

From Lemma~1 of the paper~\cite{khach12} the following lower estimate holds:
\begin{equation}\label{Khachatryan76}
\begin{array}{c}
\displaystyle\int\limits_{0}^{\infty}
\bigl(
H_{\beta_j}(x-y)-H_{\beta_j}(x+y)
\bigr)
\bigl(1-e^{-s_j y}\bigr)\,dy
\ge\\
\displaystyle\ge
\varepsilon\bigl(1-e^{-s_j x}\bigr),\quad
x\in\mathbb{R}_+,\quad j=0,1,\ldots,n-1.
\end{array}
\end{equation}

Now consider the following functions on the set $(0,+\infty)$:
$$
Q_j(x):=
\frac{\varepsilon\bigl(1-e^{-s_j x}\bigr)}
{1-2\displaystyle\int\limits_{x}^{\infty} H_{\beta_j}(y)\,dy},
\qquad
x>0,\quad j=0,1,\ldots,n-1.
$$

Taking into account \eqref{Khachatryan70}, \eqref{Khachatryan76} and applying L'Hôpital's rule,
we obtain
\begin{equation}\label{Khachatryan77}
Q_j(+\infty)=\varepsilon,
\quad
Q_j(x)>0,\ x>0,
\
Q_j\in C(0,+\infty),
\ j=0,1,\ldots,n-1,
\end{equation}
\begin{equation}\label{Khachatryan78}
\begin{array}{c}
\displaystyle Q_j(x)
\le
\frac{\varepsilon(1-e^{-s_j x})}
{\int\limits_{0}^{\infty}
\bigl(
H_{\beta_j}(x-y)-H_{\beta_j}(x+y)
\bigr)
\bigl(1-e^{-s_j y}\bigr)\,dy}
\le 1,\\
\displaystyle x>0,\quad j=0,1,\ldots,n-1,
\end{array}
\end{equation}
\begin{equation}\label{Khachatryan79}
Q_j(+0)
=
\lim_{x\to+0} Q_j(x)
=
\lim_{x\to+0}
\frac{\varepsilon s_j e^{-s_j x}}
{2 H_{\beta_j}(x)}
=
\frac{\varepsilon s_j}{2\sqrt{4\pi\,\beta_j}},
\
j=0,1,\ldots,n-1.
\end{equation}

From \eqref{Khachatryan77}, \eqref{Khachatryan78} and \eqref{Khachatryan79} it follows that
$Q_j\in C(\mathbb{R}_+)$,
$j=0,1,\ldots,n-1$,
and for each $j\in\{0,1,\ldots,n-1\}$
there exists $\tilde{\sigma}_j\in(0,1)$ such that
\begin{equation}\label{Khachatryan80}
Q_j(x)\ge \tilde{\sigma}_j,
\qquad
x\in\mathbb{R}_+.
\end{equation}

Taking into account \eqref{Khachatryan76}, \eqref{Khachatryan80}, \eqref{Khachatryan74} and the inequality
$H_{\beta_j}(x-y)\ge H_{\beta_j}(x+y)$,
$x,y\in\mathbb{R}_+$,
$j=0,1,\ldots,n-1$,
we obtain
$$
(A v_0)(t,x_1,\ldots,x_{n-1})
=
$$
$$
=
\int\limits_{0}^{\infty}\!\!\cdots\!\!\int\limits_{0}^{\infty}
\bigl(H_{\beta_0}(t-\tau)-H_{\beta_0}(t+\tau)\bigr)
\prod_{j=1}^{n-1}
\bigl(
H_{\beta_j}(x_j-y_j)-H_{\beta_j}(x_j+y_j)
\bigr)
\,d\tau\,dy_1\cdots dy_{n}\ge
$$
$$
\ge
\int\limits_{0}^{\infty}\!\!\cdots\!\!\int\limits_{0}^{\infty}
\bigl(H_{\beta_0}(t-\tau)-H_{\beta_0}(t+\tau)\bigr)(1-e^{-s_0\tau})\times
$$
$$
\times\prod_{j=1}^{n-1}
\bigl(
H_{\beta_j}(x_j-y_j)-H_{\beta_j}(x_j+y_j)
\bigr)
\bigl(1-e^{-s_j y_j}\bigr)
\,d\tau\,dy_1\cdots dy_{n}\ge
$$
$$
\ge\varepsilon^n (1-e^{-s_0t}) \prod_{j=1}^{n-1}\bigl(1-e^{-s_j x_j}\bigr),\quad (t,x_1,\ldots,x_{n-1})\in\mathbb{R}_+^n.
$$
From this, due to the monotonicity of the operator, we obtain
$$
(A^2 v_0)(t,x_1,\ldots,x_{n-1})
\ge $$$$\ge
\int\limits_{0}^{\infty}\!\!\cdots\!\!\int\limits_{0}^{\infty}
\bigl(H_{\beta_0}(t-\tau)-H_{\beta_0}(t+\tau)\bigr)
\prod_{j=1}^{n-1}
\bigl(
H_{\beta_j}(x_j-y_j)-H_{\beta_j}(x_j+y_j)
\bigr)\times
$$
$$
\times
\varepsilon^{n\alpha}
\bigl(1-e^{-s_0 \tau}\bigr)^{\alpha}
\prod_{j=1}^{n-1}
\bigl(1-e^{-s_j y_j}\bigr)^{\alpha}
\,d\tau\,dy_1\cdots dy_{n}\ge
$$
$$
\ge
\varepsilon^{n\alpha}
\int\limits_{0}^{\infty}\!\!\cdots\!\!\int\limits_{0}^{\infty}
\bigl(H_{\beta_0}(t-\tau)-H_{\beta_0}(t+\tau)\bigr)
\prod_{j=1}^{n-1}
\bigl(
H_{\beta_j}(x_j-y_j)-H_{\beta_j}(x_j+y_j)\bigr)\times
$$
$$
\times
\bigl(1-e^{-s_0 \tau}\bigr)
\prod_{j=1}^{n-1}
\bigl(1-e^{-s_j y_j}\bigr)
\,d\tau\,dy_1\cdots dy_{n}\ge
$$
$$
\ge \varepsilon^{n\alpha}\varepsilon^n(1-e^{-s_0t})\prod_{j=1}^{n-1}\bigl(1-e^{-s_j x_j}\bigr)\ge
$$
\[
\ge
\varepsilon^{\alpha n}
\prod_{j=0}^{n-1}\tilde{\sigma}_j
\int\limits_{0}^{\infty}\!\!\cdots\!\!\int\limits_{0}^{\infty}
\bigl(H_{\beta_0}(t-\tau)-H_{\beta_0}(t+\tau)\bigr)\times$$$$\times
\prod_{j=1}^{n-1}
\bigl(
H_{\beta_j}(x_j-y_j)-H_{\beta_j}(x_j+y_j)
\bigr)
\,d\tau\,dy_1\cdots dy_{n}=
\]
$$
= \varepsilon^{\alpha n}
\prod_{j=0}^{n-1}\tilde{\sigma}_j (Av_0)(t,x_1,\ldots,x_{n-1}).
$$

Thus,
\begin{equation}\label{Khachatryan81}
\begin{array}{c}
\displaystyle(A^2 v_0)(t,x_1,\ldots,x_{n-1})
\ge
\sigma_0\,(A v_0)(t,x_1,\ldots,x_{n-1}),\\ \displaystyle (t,x_1,\ldots,x_{n-1})\in\mathbb{R}_+^n,
\end{array}
\end{equation}
where
\[
\sigma_0
:=
\varepsilon^{\alpha n}
\prod_{j=0}^{n-1}\tilde{\sigma}_j
\in(0,1).
\]

On the other hand, from \eqref{Khachatryan70} it follows that
$
(A v_0)(t,x_1,\ldots,x_{n-1})\le 1= v_0,
$
whence, by monotonicity of operator $A,$ we obtain
\begin{equation}\label{Khachatryan82}
\begin{array}{c}
\displaystyle
(A^2 v_0)(t,x_1,\ldots,x_{n-1})
\le
(A v_0)(t,x_1,\ldots,x_{n-1}),
\\
\displaystyle
(t,x_1,\ldots,x_{n-1})\in\mathbb{R}_+^n.
\end{array}
\end{equation}

From \eqref{Khachatryan81}, \eqref{Khachatryan82} we conclude condition \eqref{Khachatryan4} for the operator \(A\)
defined according to \eqref{Khachatryan74}.
Thus, from Theorem~\ref{thm1} it follows that the operator \(A\) has a fixed point
\(\psi \in K\), \(\psi \neq 0\), and
$$
\tau^{*}(A v_0)(t,x_1,\ldots,x_{n-1})
\le
\psi(t,x_1,\ldots,x_{n})
\le
(A v_0)(t,x_1,\ldots,x_{n-1})
\le 1,
$$
\[
(t,x_1,\ldots,x_{n-1})\in\mathbb{R}_+^{n},
\]
where the number \(\tau^{*}\in(0,\sigma_0)\), and in this case
$
\tau^{*}=\sigma_0^{\frac{1}{1-\alpha}}.
$

From \eqref{Khachatryan73} and \eqref{Khachatryan74} it follows that the function
$
\varphi(t,x_1,\ldots,x_{n-1})
=:
\psi^{\alpha}(t,x_1,\ldots,x_{n-1})
$
is a solution of equation \eqref{Khachatryan73}.
Further, performing the procedure \(1)-n-1)\) and after that extending oddly
the function \linebreak \(\varphi_{n-1}(t,x_1,\ldots,x_{n-1})\) with respect to \(x_{n-1}\) onto \((-\infty,0)\),
we obtain a
sign-changing solution of equation \eqref{Khachatryan72} which is nontrivial, bounded, and continuous on \(\mathbb{R}^{n}\).
\medskip
\subsection{The nontrivial and bounded solution of nonlinear Urysohn-type integral
 equations in the critical case.}\label{subsec5.2}
On the set
$
\mathbb{R}: = (-\infty,+\infty),
$ consider the following class of nonlinear integral equations
of Urysohn type:
\begin{equation}\label{Khachatryan83}
f(x)
=
\int\limits_{-\infty}^{+\infty}
U(x,t,f(t))\,dt,
\qquad x\in\mathbb{R},
\end{equation}
with respect to the unknown nonnegative measurable and bounded function $f(x).$

The Urysohn kernel of equation \eqref{Khachatryan83} satisfies the following conditions:
\begin{enumerate}
  \item[$u_1)$] $U(x,t,0)\equiv0, (x,t)\in\mathbb{R}^2,$ and for every fixed \((x,t)\in\mathbb{R}^2\) the function \(U(x,t,z)\) is monotonically increasing in \(z\) on  \(\mathbb{R}_+\),
  \item[$u_2)$] There exists a number \(\eta>0\) such that
$
\int\limits_{-\infty}^{+\infty} U(x,t,\eta)\,dt \in C(\mathbb{R}),
$
$$
\sup\limits_{x\in\mathbb{R}}
\int\limits_{-\infty}^{+\infty} U(x,t,\eta)\,dt
=
\lim\limits_{|x|\to\infty}
\int\limits_{-\infty}^{+\infty} U(x,t,\eta)\,dt
=
\eta.
$$
  \item[$u_3)$] There exists a continuous monotonically increasing and concave
mapping
$
\varphi:[0,1]\to[0,1]
$
with the properties
$
\varphi(0)=0, \varphi(1)=1,$\linebreak $ \varphi'(0+)=+\infty,
$
such that
$$
U(x,t,\sigma z)\ge \varphi(\sigma)\,U(x,t,z),
\quad
\sigma\in[0,1],\;
(x,t)\in\mathbb{R}^2,\;
z\in\mathbb{R}_+.
$$
\end{enumerate}

Below, using Theorem~\ref{thm1}, we show that equation \eqref{Khachatryan83},
under conditions \(u_1)\)–\(u_3)\), possesses a positive and bounded
solution on \(\mathbb{R}\).

First, note that from condition \(u_1)\) it immediately follows
that a trivial (zero) solution of equation \eqref{Khachatryan83} exists.
We shall prove that besides such a trivial solution there exists a solution
\(f\) of equation \eqref{Khachatryan83} satisfying the inequality
\[
\tau^{*}\eta \le f(x) \le \eta, \quad x \in \mathbb{R},
\]
where $\tau^{*}\in(0,1),$ and it is determined from a certain characteristic
equation (see below).

Let $B(\mathbb{R})$ be the space of bounded functions on $\mathbb{R}$.
Denote by $K$ the cone of nonnegative functions in $B(\mathbb{R})$.
Obviously, the conic segment $\langle 0,\eta\rangle$ is a subset of $K.$

Consider the following nonlinear integral operator on $K$:
\begin{equation}\label{Khachatryan84}
(Af)(x)=\int\limits_{-\infty}^{+\infty} U(x,t,f(t))\,dt,
\quad x\in\mathbb{R},\ f\in K.
\end{equation}

Let us verify that $A:K\to K$. Indeed, let $f\in K$ be
an arbitrary function. Then, using conditions $u_1)$–$u_3)$, from \eqref{Khachatryan84}
we obtain
$$
0\le (Af)(x)\le \int\limits_{-\infty}^{+\infty}U(x,t,
\sup_{t\in\mathbb{R}} f(t))\,dt
= \int\limits_{-\infty}^{+\infty} U\left(x,t,
\frac{\sup\limits_{t\in\mathbb{R}} f(t)}{\eta}\,\eta\right)\,dt \leq
$$
$$
\le \int\limits_{-\infty}^{+\infty}
U\!\left(x,t,\max\left\{1,\frac{\sup\limits_{t\in\mathbb{R}} f(t)}{\eta}\right\}\eta\right)\,dt \le$$$$
\le \frac{1}{\varphi\!\left(
\dfrac{1}{\max\left\{1,\frac{\sup\limits_{t\in\mathbb{R}} f(t)}{\eta}\right\}}
\right)}
\int\limits_{-\infty}^{+\infty} U(x,t,\eta)\,dt \le
$$
$$
\le \eta \max\left\{1,\frac{\sup\limits_{t\in\mathbb{R}} f(t)}{\eta}\right\}
<+\infty,\quad x\in\mathbb{R}.
$$

From the monotonicity of the function $U(x,t,z)$ with respect to $z,$
it follows that the operator $A$ is monotone in the cone $K$.
Taking into account condition $u_3)$, from \eqref{Khachatryan84}
we obtain that
$
(A\sigma f)(x)\ge \varphi(\sigma)\,(Af)(x),
$ $ x\in\mathbb{R},\ \sigma\in[0,1],\ f\in K.
$

Finally, taking $v_0\equiv\eta$ and choosing $n_0=1$,
let us verify condition \eqref{Khachatryan4}.
Taking into account condition $u_2)$, from \eqref{Khachatryan84} we have
\begin{equation}\label{Khachatryan85}
(A v_0)(x)=\int\limits_{-\infty}^{+\infty} U(x,t,\eta)\,dt \le \eta=v_0,
\quad x\in\mathbb{R}.
\end{equation}

On the other hand, since conditions $u_1)$ and $u_2)$ hold,
there exists a number $r>0$ such that
\begin{equation}\label{Khachatryan86}
\int\limits_{-\infty}^{+\infty} U(x,t,\eta)\,dt \ge \frac{\eta}{2},
\quad |x|>r .
\end{equation}

Since
$
\int\limits_{-\infty}^{+\infty} U(x,t,\eta)\,dt \in C(\mathbb{R})
\quad \text{and} \quad
\int\limits_{-\infty}^{+\infty} U(x,t,\eta)\,dt>0,\ x\in\mathbb{R}
$
(see~conditions $u_1)$, $u_2)$), according to the Weierstrass theorem we obtain
\begin{equation}\label{Khachatryan87}
\mu := \min_{|x|\le r}
\int\limits_{-\infty}^{+\infty} U(x,t,\eta)\,dt >0 .
\end{equation}

Therefore, taking into account \eqref{Khachatryan86} and \eqref{Khachatryan87},
from \eqref{Khachatryan84} we obtain
\begin{equation}\label{Khachatryan88}
(A v_0)(x)\ge \frac{\min\left\{\mu,\frac{\eta}{2}\right\}}{\eta}
v_0,\quad x\in\mathbb{R}.
\end{equation}

Since
$
\sigma_0:= \dfrac{\min\{\mu,\frac{\eta}{2}\}}{\eta}\in\left(0,\frac12\right],
$
from \eqref{Khachatryan85} and \eqref{Khachatryan88} we conclude
that condition \eqref{Khachatryan4}
is satisfied for the operator $A$ defined by formula \eqref{Khachatryan84}.

Also note that the cone $K$ is normal, since the norm
is monotone in $K.$

Therefore, according to Theorem~\ref{thm1} equation \eqref{Khachatryan83}
possesses a positive solution $f,$ which is the uniform limit
of successive approximations
$
f_{n+1}(x)=(Af_n)(x), \ f_0(x)\equiv\eta,\ x\in\mathbb{R},
$
which converge at a rate of a geometric progression.

Moreover, the solution $f$ satisfies the two-sided inequality
$
\tau^*\,\eta \le f(x)\le \eta,\ x\in\mathbb{R},
$
where $\tau^*$ is uniquely determined from
the characteristic equation
$
\varphi(\tau)=\dfrac{\tau}{\sigma_0},
\ \tau^*\in(0,\sigma_0).
$

It is worth noting that, for various
particular representations of the Urysohn kernel $U,$ equation \eqref{Khachatryan83} has important
applications in the theory of nonlinear radiation transfer in spectral
lines and in the kinetic theory of gases within the framework of both the
classical and modified Bhatnagar–Gross–Krook models
(see \cite{eng13}-\cite{khach15}).

Using Theorem~\ref{thm7}, one can also assert that the solution of equation
\eqref{Khachatryan83} is unique in the conic segment
$
\langle \tau^* \eta,\eta\rangle .
$
\subsection{Application to a Cauchy problem
for a semilinear heat equation}\label{subsec5.3}

Consider the following Cauchy problem for a semilinear heat equation:
\begin{equation}\label{Khachatryan89}
\frac{\partial u}{\partial t}-\Delta u
=
\lambda(\text{x},t)\,G(u),
\quad \text{x}\in\mathbb{R}^n,\ t>0,
\end{equation}
\begin{equation}\label{Khachatryan90}
u(\text{x},0)=u_0(\text{x}),
\quad \text{x}\in\mathbb{R}^n,
\end{equation}
with respect to the unknown positive, bounded, and smooth
function $u(\text{x},t)$ on $\mathbb{R}^n\times(0,+\infty)$. Assume that the initial function $u_0(\text{x})$ satisfies the following conditions:
\begin{enumerate}
\item[$C_1)$]
$
u_0 \in \mathfrak{M} :=
\Bigl\{
\psi:\ \psi,\ \nabla_\text{x} \psi,\ \nabla_x\nabla_x \psi \in C_B(\mathbb{R}^n)
\Bigr\},
$\\
where
$
\nabla_\text{x} :=
\left\{
\frac{\partial}{\partial x_1},\ldots,
\frac{\partial}{\partial x_n}
\right\}.
$
\item[$C_2)$]
$
c_0:=\inf\limits_{\text{x}\in\mathbb{R}^n} u_0(\text{x}) >0.
$
\end{enumerate}

For $\lambda(\text{x},t)$ assume the following restrictions:

\begin{enumerate}
\item[$p_1)$]
$\lambda(\text{x},t)$ is continuously differentiable
on the set $\mathbb{R}^n\times(0,+\infty)$,
and both $\lambda$ and $\nabla_\text{x} \lambda$
are bounded on $\mathbb{R}^n\times(0,+\infty)$;

\item[$p_2)$]
there exist continuous functions
$\lambda_j(t)$, $j=1,2$,
on $[0,+\infty)$ with the following properties:
\begin{equation}\label{Khachatryan91}
0<\lambda_1(t)\le \lambda_2(t),\qquad t\in(0,+\infty),\qquad
\lambda_2\in L_1(0,+\infty),
\end{equation}
\begin{equation}\label{Khachatryan92}
\lambda_1(t)\not\equiv \lambda_2(t),\qquad t\in[0,+\infty),\qquad
0<\lim_{t\to 0+}\frac{\lambda_1(t)}{\lambda_2(t)}<+\infty,
\end{equation}
such that
\begin{equation}\label{Khachatryan93}
\lambda_1(t)\le \lambda(\text{x},t)\le \lambda_2(t),\qquad \text{x}\in\mathbb{R}^n,\ t>0.
\end{equation}
\end{enumerate}

Finally, the nonlinearity $G$ in equation \eqref{Khachatryan89} satisfies the following conditions:
\begin{enumerate}
\item[(G$_1$)] $G\in C[0,+\infty)$, $G(0)=0$, and the function $y=G(u)$ is monotonically increasing
on the set $[0,+\infty)$;
\item[(G$_2$)] the function $y=G(u)$ is strictly concave on the set $[0,+\infty)$;
\item[(G$_3$)] there exists a continuous, monotonically increasing, and strictly concave
mapping $\varphi:[0,1]\to[0,1]$ with properties $\varphi(0)=0$, $\varphi(1)=1$,
$\varphi'(+0)=+\infty$ such that
$$
G(\sigma u)\ge \varphi(\sigma)\,G(u),\qquad u\in\mathbb{R}_+,\ \sigma\in[0,1].
$$
\end{enumerate}

By a mild global positive solution of the problem \eqref{Khachatryan89}, \eqref{Khachatryan90}
we mean a continuous and positive on $\mathbb{R}^n\times(0,+\infty)$
solution of the following nonlinear integral equation (see \cite{korp16}):
\begin{equation}\label{Khachatryan94}
\begin{array}{c}
\displaystyle u(\text{x},t)=g(\text{x},t)+\int\limits_{0}^{t}\int\limits_{\mathbb{R}^n}
U(\text{x,y},t-s)\,\lambda(\text{y},s)\,G\bigl(u(\text{y},s)\bigr)\,d\text{y}\,ds,\\ \displaystyle\text{x}\in\mathbb{R}^n,\ t>0,
\end{array}
\end{equation}
where
\begin{equation}\label{Khachatryan95}
\begin{array}{c}
\displaystyle
g(\text{x},t):=\int\limits_{\mathbb{R}^n} U(\text{x,y},t)\,u_0(\text{y})\,d\text{y},
\\
\displaystyle U(\text{x,y},t)=\frac{1}{(4\pi t)^{\frac n2}}
\exp\!\left(-\frac{|\text{x}-\text{y}|^2}{4t}\right), \  \text{x,y}\in\mathbb{R}^n,\ t>0.
\end{array}
\end{equation}

A natural question arises: how are the Cauchy problem \eqref{Khachatryan89}, \eqref{Khachatryan90}
and the nonlinear integral equation \eqref{Khachatryan94} related, and when does a mild
global positive solution of the problem \eqref{Khachatryan89}, \eqref{Khachatryan90}
become smooth?

To answer this question, we present the following definitions
(see~\cite{korp16}, \cite{fuj17}).

Let $T>0$ be a given number. A nonnegative function $u(\text{x},t)$
will be called a classical solution of the problem
\eqref{Khachatryan89}, \eqref{Khachatryan90} on the interval $[0,T]$
if $u$, $\nabla_\text{x} u$, $\nabla_x\nabla_x u$, $\dfrac{\partial u}{\partial t}$
exist and are continuous in the strip
$
Q_T:=\mathbb{R}^n\times[0,T],
$
and for all $\text{x}\in\mathbb{R}^n$, $t\in(0,T]$ equation \eqref{Khachatryan89}
is satisfied, and
$
u(\text{x},0)=u_0(\text{x}), \ \text{x}\in\mathbb{R}^n.
$

Denote by $\varepsilon[0,T]$ the set of
functions $u(\text{x},t)$ continuous on $Q_T,$ for which there exist constants $M>0$
and $\gamma\in(0,2)$ such that
$
|u(\text{x},t)|\le M \exp\!\bigl(|\text{x}|^{\gamma}\bigr),
$ $ \text{x}\in\mathbb{R}^n,\ t\in(0,T].
$

Using arguments analogous to those in Lemma 8.4 of~\cite{korp16},
one can verify that if $u(\text{x},t)\geq0$ is a classical solution of the problem
\eqref{Khachatryan89}, \eqref{Khachatryan90} in $\varepsilon[0,T]$ for some $T>0$,
$u_0\in\mathfrak{M}$ and $u_0(\text{x})\ge 0$, $x\in\mathbb{R}^n$,
then $u(\text{x},t)$ satisfies the nonlinear integral equation
\eqref{Khachatryan94} on the set $\mathbb{R}^n\times(0,T]$.
Moreover, if the nonlinearity $G$ is continuously differentiable
on the set $(0,+\infty),$ and if $u(\text{x},t)$ is a nonnegative solution of the integral equation \eqref{Khachatryan94}, continuous
on $Q_T,$
and $u(\text{x},t)$ is bounded on $Q_T$,
then under condition $p_1)$ the function $u(\text{x},t)$
is a classical solution of the problem
\eqref{Khachatryan89}, \eqref{Khachatryan90}
(see Lemma 8.6 in~\cite{korp16}).

Consider the cone of nonnegative functions in the space
$C_B(\mathbb{R}^n\times(0,+\infty))$:
$
K:=\{\,v\in C_B(\mathbb{R}^n\times(0,+\infty)):\;
v(\text{x},t)\ge 0,\; \text{x}\in\mathbb{R}^n,\; t>0 \,\}.
$

Introduce the following nonlinear integral operator:
\begin{equation}\label{Khachatryan96}
\begin{array}{c}
\displaystyle(Av)(\text{x},t)
=
\int\limits_{0}^{t}\!
\int\limits_{\mathbb{R}^n}
U(\text{x},\text{y},t-s)\lambda(\text{y},s)
G\bigl(v(\text{y},s)+g(\text{y},s)\bigr)
\,d\text{y}\,ds,\\
\displaystyle \text{x}\in\mathbb{R}^n,\; t>0.
\end{array}
\end{equation}

Using conditions $C_1),C_2), G_1), p_1)$ and $p_2)$,
using \eqref{Khachatryan95} and the obvious equality
$
\int\limits_{\mathbb{R}^n} U(\text{x},\text{y},t)\,d\text{y} = 1,
\ \text{x}\in\mathbb{R}^n,\; t>0,
$
it is easy to verify that
$
A:K\to K,
$
and that the operator $A$ is monotone in $K.$ Now, using conditions $C_2),$ $p_2),$ $G_1),$ $G_3),$
using \eqref{Khachatryan95} from \eqref{Khachatryan96} we obtain
$$
(A\sigma v)(\text{x},t)
=
\int\limits_{0}^{t}
\int\limits_{\mathbb{R}^n}
U(\text{x},\text{y},t-s)\,\lambda(\text{y},s)\,
G\bigl(\sigma\,v(\text{y},s)+g(\text{y},s)\bigr)\,d\text{y}\,ds\ge
$$
$$\ge
\int\limits_{0}^{t}
\int\limits_{\mathbb{R}^n}
U(\text{x},\text{y},t-s)\,\lambda(\text{y},s)\,
G\bigl(\sigma\,(v(\text{y},s)+g(\text{y},s))\bigr)\,d\text{y}\,ds\ge
$$
$$\ge
\varphi(\sigma)
\int\limits_{0}^{t}
\int\limits_{\mathbb{R}^n}
U(\text{x},\text{y},t-s)\,\lambda(\text{y},s)\,
G\bigl(v(\text{y},s)+g(\text{y},s)\bigr)\,d\text{y}\,ds=
$$
$$=
\varphi(\sigma)\,(Av)(\text{x},t),
\qquad \text{x}\in\mathbb{R}^n,\ t>0,\ \sigma\in[0,1],\ v\in K.
$$
Finally, let us verify condition \eqref{Khachatryan28} for the operator $A$.
We choose $n_0$ to be $n_0=2,$
and $v_0$ to be any positive number $\xi$.
Then, from \eqref{Khachatryan96}, taking into account conditions $C_1)$, $C_2)$, $p_2)$, $G_1)$
and representation \eqref{Khachatryan95}, we obtain
$$
(A v_0)(\text{x},t)
=
\int\limits_{0}^{t}
\int\limits_{\mathbb{R}^n}
U(\text{x},\text{y},t-s)\,\lambda(\text{y},s)\,
G(\xi+g(\text{y},s))\,d\text{y}\,ds\le
$$
$$\le
G\!\left(\xi+\sup_{\text{x}\in\mathbb{R}^n}u_0(\text{x})\right)
\int\limits_{0}^{t}
\int\limits_{\mathbb{R}^n}
U(\text{x},\text{y},t-s)\,\lambda(\text{y},s)\,d\text{y}\,ds\le
$$
$$\le
G\!\left(\xi+\sup_{\text{x}\in\mathbb{R}^n}u_0(\text{x})\right)
\int\limits_{0}^{t}\lambda_2(s)\,ds,
\qquad \text{x}\in\mathbb{R}^n,\ t>0.
$$
On the other hand,
$$
(A v_0)(\text{x},t)
\ge
G(\xi+c_0)
\int\limits_{0}^{t}
\int\limits_{\mathbb{R}^n}
U(\text{x},\text{y},t-s)\,\lambda(\text{y},s)\,d\text{y}\,ds\ge
$$
$$\ge G(\xi+c_0)\int\limits_0^t\lambda_1(s)ds,\ \text{x}\in\mathbb{R}^n,\; t>0. $$
Thus, if we denote by $\beta_0:=\sup\limits_{\text{x}\in\mathbb{R}^n}u_0(\text{x})<+\infty,$ then, from the inequalities obtained above, we arrive at the following two–sided inequality:
\begin{equation}\label{Khachatryan97}
\begin{array}{c}
\displaystyle G(\xi+c_0)\int\limits_{0}^{t}\lambda_1(s)\,ds
\le
(Av_0)(\text{x},t)
\le
G(\xi+\beta_0)\int\limits_{0}^{t}\lambda_2(s)\,ds,
\\
\displaystyle \text{x}\in\mathbb{R}^n,\; t>0,
\end{array}
\end{equation}

Taking into account the monotonicity of the operator $A$ and using
conditions $C_1),C_2),p_2),G_1)$, as well as representation
\eqref{Khachatryan95} and the two–sided inequality \eqref{Khachatryan97}, from \eqref{Khachatryan96}
we obtain
$$
(A^2v_0)(\text{x},t)
\le$$$$\le
\int\limits_{0}^{t}\int\limits_{\mathbb{R}^n}
U(\text{x},\text{y},t-s)\lambda(\text{y},s)
G\!\left(
G(\xi+\beta_0)\int\limits_{0}^{s}\lambda_2(\tau)d\tau
+
g(\text{y},s)
\right)d\text{y}\,ds\le
$$
$$\le
G\!\left(
G(\xi+\beta_0)\|\lambda_2\|_{L_1(0,+\infty)}
+
\beta_0
\right)
\int\limits_{0}^{t}\lambda_2(s)\,ds ,
\qquad \text{x}\in\mathbb{R}^n,\; t>0 .
$$
$$
(A^2v_0)(\text{x},t)
\ge$$$$\ge
\int\limits_{0}^{t}\int\limits_{\mathbb{R}^n}
U(\text{x},\text{y},t-s)\lambda(\text{y},s)
G\!\left(
G(\xi+c_0)\int\limits_{0}^{s}\lambda_1(\tau)d\tau
+
g(\text{y},s)
\right)d\text{y}\,ds\ge
$$
$$\ge
G(c_0)\int\limits_{0}^{t}\lambda_1(s)\,ds ,
\qquad \text{x}\in\mathbb{R}^n,\; t>0 .
$$

Thus we arrive at the following chain of inequalities:
\begin{equation}\label{Khachatryan98}
\begin{array}{c}
\displaystyle G(c_0)\int\limits_{0}^{t}\lambda_1(s)\,ds
\le
(A^2v_0)(\text{x},t)
\le\\
\displaystyle\le
G\!\left(
G(\xi+\beta_0)\|\lambda_2\|_{L_1(0,+\infty)}
+
\beta_0
\right)
\int\limits_{0}^{t}\lambda_2(s)\,ds, \text{x}\in\mathbb{R}^n, t>0.
\end{array}
\end{equation}

From \eqref{Khachatryan97} and \eqref{Khachatryan98} it follows that
\begin{equation}\label{Khachatryan99}
\begin{array}{c}
\displaystyle\frac{G(c_0)\displaystyle\int\limits_0^t \lambda_1(s)\,ds}
     {G(\xi+\beta_0)\displaystyle\int\limits_0^t \lambda_2(s)\,ds}
\,(Av_0)(\text{x},t)
\le
(A^2v_0)(\text{x},t)\le\\
\displaystyle\le
\frac{G\!\left(G(\xi+\beta_0)\|\lambda_2\|_{L_1(0,+\infty)}+\beta_0\right)
      \displaystyle\int\limits_0^t \lambda_2(s)\,ds}
     {G(\xi+c_0)\displaystyle\int\limits_0^t \lambda_1(s)\,ds}
\,(Av_0)(\text{x},t),\\
 \text{x}\in\mathbb{R}^n,\ t>0.
\end{array}
\end{equation}

Consider the function
\[
\chi(t):=
\frac{\displaystyle\int\limits_0^t \lambda_1(s)\,ds}
     {\displaystyle\int\limits_0^t \lambda_2(s)\,ds},
\qquad t>0.
\]

From condition $p_2)$ it immediately follows that
\[
\chi\in C(0,+\infty),\qquad
\chi(t)\le1,\quad t>0,\qquad
\chi(+\infty)=
\frac{\displaystyle\int\limits_0^\infty \lambda_1(s)\,ds}
     {\displaystyle\int\limits_0^\infty \lambda_2(s)\,ds}
\le1,
\]
\[
\lim_{t\to0+}\chi(t)
=
\lim_{t\to0+}\frac{\lambda_1(t)}{\lambda_2(t)}
<+\infty,
\qquad
\lim_{t\to0+}\chi(t)>0.
\]

Consequently,
$
\chi\in C[0,+\infty)
,$
and there exists a number $\delta_0\in(0,1)$ such that
\begin{equation}\label{Khachatryan100}
\chi(t)\ge\delta_0,\qquad t\in\mathbb{R}_+ .
\end{equation}

Thus, using \eqref{Khachatryan100} and the monotonicity of $G,$ we obtain
\[
r_1(Av_0)(\text{x},t)\le (A^2v_0)(\text{x},t)\le r_2(Av_0)(\text{x},t),
\qquad \text{x}\in\mathbb{R}^n,\ t>0,
\]
where
\[
r_1:=\delta_0\,\frac{G(c_0)}{G(\xi+\beta_0)}\in(0,1),
\]
\[
r_2:=\frac1{\delta_0}
\max\!\left\{
1,
\frac{G\!\left(G(\xi+\beta_0)\|\lambda_2\|_{L_1(0,+\infty)}+\beta_0\right)}
     {G(\xi+c_0)}
\right\}\in(1,+\infty).
\]
Hence, for operator $A$ condition \eqref{Khachatryan28}  is satisfied.
From \eqref{Khachatryan95} and conditions $C_1)$, $p_1)$, $G_1)$ it follows that the operator $A$
is continuous in the cone $K$.
Therefore, taking into account the normality of the cone $K$ and using Theorem~\ref{thm3},
we conclude that the operator $A$ has a nontrivial
fixed point $v_*(\text{x},t)$  in the cone $K.$ Moreover
\begin{equation}\label{Khachatryan101}
\tau_1(Av_0)(\text{x},t)\le v_*(\text{x},t)\le \delta_1(Av_0)(\text{x},t),
\qquad \text{x}\in\mathbb{R}^n,\; t>0,
\end{equation}
where $\tau_1\in(0,r_1)$, $\delta_1\in(r_2,+\infty)$ are
the unique solutions of the characteristic equations
$
\varphi(\tau)=\frac{\tau}{r_1},
\
\delta\,\varphi\!\left(\frac1\delta\right)=r_2,
$
respectively.
Moreover, statement 2) of Theorem~\ref{thm3} holds
with respect to the norm of the space $C_B(\mathbb{R}^n\times(0,+\infty))$.

Now observe that the function
$
u_*(\text{x},t):=v_*(\text{x},t)+g(\text{x},t)
$
is a continuous, positive, and bounded on
$\mathbb{R}^n\times(0,+\infty)$ solution of the nonlinear integral
equation \eqref{Khachatryan94}.
Using the two–sided estimate \eqref{Khachatryan101} and Theorem~\ref{thm7},
it is easy to verify that, apart from solution $u_*(\text{x},t),$ equation \eqref{Khachatryan94}
 has no other solutions in the cone $K.$

It should also be noted that the methods we developed for constructing
fixed points of monotone and strongly concave operators
can also be successfully applied to the Cauchy problem for semilinear
wave equations.\\

\section*{Acknowledgements}

The research of the author was carried out with the financial support of the
Science Committee of the Republic of Armenia within the framework of
scientific project No.~23RL--1A027.\\


\bigskip
\noindent
\textbf{Khachatur Aghavardovich Khachatryan}

Dr. Phys.-Math. Sci., Professor

Department of Mathematics and Mechanics

Yerevan State University, 0025, Alex Manoogyan Str.1

E-mail: \texttt{khachatur.khachatryan@ysu.am}


\begin{thebibliography}{99}

\bibitem{kras1}
Krasnosel'skii, M.A.:
Positive solutions of operator equations.
P. Noordhoff, USA  (1964)

\bibitem{khach2}
Khachatryan, Kh.A.:
On a class of nonlinear integral equations with a noncompact operator.
J. Contemp. Math. Anal.
\textbf{46}(2),  89--100 (2011)

\bibitem{bakh3}
 Bakhtin, I.A., Krasnosel'skii, M.A.:
The method of successive approximations in the theory of equations with concave operators.
Sibirsk. Mat. Zh. \textbf{2}(3), 313--330 (1961)  (in Russian)

\bibitem{kras4}
 Krasnosel'skii, M.A., Stetsenko, V.Ya.:
Toward a theory of equations with concave operators.
Sib. Math. J.,
\text{10}(3), 405--410 (1969).

\bibitem{bre5}
Brekke, L., Freund,  P.G.O.,  Olson, M.,   Witten, E.:
Non-archimedean string dynamics.
Nuclear Physics B,
\textbf{302}(3),  365--402 (1988)

\bibitem{vla6}
Vladimirov, V.S., Volovich,  Ya.I.:
Nonlinear dynamics equation in $p$-adic string theory.
Theor. Math. Phys. \textbf{138}(3),  297--309 (2004)

\bibitem{diek7}
 Diekmann, O.: Thresholds and travelling waves for the geographical spread of infection.
J. Math. Biol. \textbf{6}(2),  109--130 (1978)

\bibitem{diek8}
Diekmann, O.,  Kaper, H.G.: On the bounded solutions of a nonlinear convolution equation.
Nonlinear Analysis \textbf{2}(6), 721--734 (1978)

\bibitem{khach9}
Khachatryan, A.Kh., Khachatryan, Kh.A.,  Petrosyan, H.S.: On the constructive solvability of one class nonlinear integral equations of the Hammerstein type on the whole line. Russian Math. (Iz. VUZ) \textbf{69}(3),  77--93 (2025)

\bibitem{aref10}
Aref'eva, I.Ya., Koshelev, A.S., Joukovskaya, L.V.:
Time evolution in superstring field theory on non-BPS brane.
I. Rolling tachyon and energy--momentum conservation.
J. High Energy Phys. \textbf{2003}(9), 1--15 (2003)

\bibitem{gel11}
Gel'fand, I.M.,  Shilov, G.E.: Spaces of fundamental and generalized functions.
Amer. Math. Soc.  (2016)

\bibitem{Io}
Iosevich, A., Liflyand, E.: Decay of the Fourier Transform.  Analytic and Geometric Aspects. Birkhäuser Basel, Springer Basel (2014)

\bibitem{KV}
Karapetyants, A.N., Vagharshakyan, A.A.:
The Hadamard-Bergman convolution on the half-plane.
J. Fourier Anal. Appl. \textbf{30}, no. 38 (2024)

\bibitem{khach12}
Khachatryan, Kh.A.: On the solvability of a boundary value problem in $p$-adic string theory. Trans. Moscow Math. Soc.\textbf{79}(1),  101--115 (2018)

\bibitem{eng13}
Engibaryan, N.B.: On a problem in nonlinear radiative transfer. Astrophysics  \textbf{2}(1), 12--14 (1966)

\bibitem{cerc14}
Cercignani, C.:
The Boltzmann equation and its applications.
Appl. Math. Sci., Springer-Verlag,
New York (1988)

\bibitem{khach15}
Khachatryan, A.Kh., Khachatryan, Kh.A.:
A one-parameter family of positive solutions of the nonlinear stationary Boltzmann equation (in the framework of a modified model).
Russian Math. Surveys  \textbf{72}(3), 571--573 (2017)

\bibitem{korp16}
Korpusov, M.O., Sveshnikov, A.G.:
Nonlinear Functional Analysis and Mathematical Modeling in Physics.
Methods for Studying Nonlinear Operators.
URSS, Moscow, (2011) (in Russian)

\bibitem{fuj17}
Fujita, H.: On the blowing up of solutions of the Cauchy problem for $u_t=\Delta u+u^{1+\alpha}.$ J. Fac. Sci. Univ. Tokyo Sect. I \textbf{13}, 109--124 (1966)

\end{thebibliography}
\end{document}